\newtheorem{thm}{Theorem}[section]
\newtheorem{cor}[thm]{Corollary}
\newtheorem{ques}[thm]{Question}
\newtheorem{prop}[thm]{Proposition}
\theoremstyle{definition}
\newtheorem{defn}[thm]{Definition}
\newtheorem{notation}[thm]{Notation}
\newtheorem{rem}[thm]{Remark}
\numberwithin{equation}{section}
\begin{document}

\title{Integral inequalities for holomorphic maps and applications}

\author{Yashan Zhang}

\address{School of Mathematics and Hunan Province Key Lab of Intelligent Information Processing and 
Applied Mathematics, Hunan University, Changsha 410082, China}
\email{yashanzh@hnu.edu.cn}

\thanks{The author is partially supported by Fundamental Research Funds for the Central Universities (No. 531118010468) and National Natural Science Foundation of China (No. 12001179)}

\begin{abstract}
We derive some integral inequalities for holomorphic maps between complex manifolds. As applications, some rigidity and degeneracy theorems for holomorphic maps without assuming any pointwise curvature signs for both the domain and target manifolds are proved, in which key roles are played by total integration of the function of the first eigenvalue of second Ricci curvature and an almost nonpositivity notion for holomorphic sectional curvature introduced in our previous work. We also apply these integral inequalities to discuss the infinite-time singularity type of the K\"ahler-Ricci flow. The equality case is characterized for some special settings.
\end{abstract}

\maketitle

\section{Introduction}
\subsection{Background}A general principle in complex geometry states that \emph{negative curvature restricts behaviors of holomorphic maps between complex manifolds}, see e.g. \cite[page 15]{KL}. The most classic result along this line should be Schwarz-Pick-Ahlfors Lemma: a non-constant holomorphic map from the unit disc (equipped with the Poincar\'e metric) to a smooth Riemann surface with negative curvature decreases distances (up to multiplying a constant factor depending only on the bounds of curvatures). Important generalizations of Schwarz-Pick-Ahlfors Lemma to higher dimensions were developed, including Chern \cite{Chern}, Lu \cite{L}, Yau \cite{Y78}, Royden \cite{R}, etc.. Here, we in particular recall Yau's general Schwarz Lemma \cite{Y78} that a holomorphic map from a complete K\"ahler manifold of \emph{Ricci curvature} bounded from below to a Hermitian manifold of \emph{holomorphic bisectional curvature} bounded from above by a negative constant decreases distances (up to multiplying a constant factor depending only on the bounds of curvatures). Royden \cite{R} proved that similar result holds if the target space is a K\"ahler manifold of \emph{holomorphic sectional curvature} bounded from above by a negative constant. In particular, their results imply a fundamental rigidity theorem that \emph{a holomorphic map from a compact K\"ahler manifold of positive Ricci curvature to a Hermitian (resp. K\"ahler) manifold of nonpositive holomorphic bisectional (resp. sectional) curvature must be constant}. Excellent expositions on differential geometric developments of Schwarz-type Lemma can be found in \cite{KL}. More recently, there are significant progresses on this topic, which relaxed either the curvature assumptions or K\"ahlerian condition, see \cite{Ni,To,Y1,Y2,YZ} and references therein for more details.

\subsection{Motivations}\label{moti}
Let's focus on the rigidity theorems. The general philosophy behind rigidity theorems for holomorphic maps is that a holomorphic map from a positively curved space to a negatively curved space should be constant. Our study here is mainly motivated by the following natural question: can we make this philosophy more effective? To be more precise, let's look at, for example,  the aforementioned fundamental rigidity theorem of Yau and Royden: \emph{a holomorphic map from a compact K\"ahler manifold of positive (resp. nonnegative) Ricci curvature to a compact K\"ahler manifold of nonpositive (resp. negative) holomorphic sectional curvature must be constant}. For convenience, here we have assumed that both the domain and target manifolds are compact K\"ahler. Consequently, given a non-constant holomorphic map $f:(X,\omega)\to(Y,\eta)$ between two compact K\"ahler manifolds, 
\begin{itemize}
\item [(I)] if the Ricci curvature of $\omega$ is positive, then the supremum of holomorphic sectional curvature, denoted by $\sup_YH^\eta$ (see Notation \ref{hc}), must be positive;
\item [(II)] if the holomorphic sectional curvature of $\eta$ is negative, then the infimum of Ricci curvature of $\omega$, denoted by $\inf_X\lambda_\omega$ (see Notation \ref{fe}), must be negative.
\end{itemize}
Then, regarding the effectiveness, we may naturally ask (in the above setting):
\begin{itemize}
\item[(A)] in the above case (I), can we have an effective positive lower bound for $\sup_YH^\eta$?
\item[(B)] in the above case (II), can we have an effective negative upper bound for $\inf_X\lambda_\omega$?
\end{itemize}
The effectiveness results, if can be obtained, will play crucial roles in several problems, including 
\begin{itemize}
\item weakening/sharpening the curvature conditions in the rigidity theorems for holomorphic maps;
\item dealing with a family of K\"ahler/Hermitian metrics on either the domain or target manifolds, which naturally arises in the study of complex geometric flows including the K\"ahler-Ricci flow and the Chern-Ricci flow.  
\end{itemize}

\par Firstly, we observe that the case (B) readily follows from Yau's general Schwarz Lemma. Precisely, in the above case (II) there holds ($dim_{\mathbb C}Y=m$)
\begin{align}\label{sl}
f^*\eta\le\frac{\inf_X\lambda_\omega}{\frac{m+1}{2m}\sup_YH^\eta}\cdot\omega
\end{align}
on $X$ and hence
\begin{align}\label{int_sl}
\inf_X\lambda_\omega\le\frac{\frac{m+1}{2m}(\sup_YH^\eta)\int_Xf^*\eta\wedge\omega^{n-1}}{\int_X\omega^n},
\end{align}
giving an effective negative upper bound for $\inf_X\lambda_\omega$. One may interpret \eqref{int_sl} as an integral version of \eqref{sl}.\\

\par In the following, let's focus on the case (A). Under the curvature conditions in case (I), we don't have the ``reverse" Schwarz Lemma, (say, $f^*\eta\ge\frac{\inf_X\lambda_\omega}{\frac{m+1}{2m}\sup_YH^\eta}\cdot\omega$), leaving the (integral) inequality \eqref{int_sl} unclear. Noting that \eqref{int_sl} should be much weaker than the (``reverse") Schwarz Lemma, to solve the above case (A) we are naturally leaded to prove the inequality \eqref{int_sl} directly. In this paper, we shall prove a general integral inequality for non-constant holomorphic maps in a general setting without assuming any curvature conditions (Theorem \ref{main_const}), which in particular implies the desired inequality \eqref{int_sl} in its setting (up to the constant factor $\frac{m+1}{2m}$), and hence solves the above case (A) completely. We should mention that for the special case $(X,\omega)=(\mathbb{CP}^1,\omega_{FS})$, an inequality similar to \eqref{int_sl} was previously proved in Tosatti-Y.G.Zhang \cite[Section 4, Remark 4.1]{ToZyg} by a different method (see subsection \ref{subsect_cp1} for some more discussions).
 
\subsection{Main results: integral inequalities for holomorphic maps}
In this paper, we shall present several integral inequalities for non-constant or non-degenerate holomorphic maps between two complex manifolds without assuming any curvature condition, in which we assume the domain manifold is compact. Precisely, we shall prove the followings:

\begin{thm}\label{main_const}
Let $(X,\omega)$ and $(Y,\eta)$ be two Hermitian manifolds and $(X,\omega)$ be $n$-dimensional and compact. Then there exists a smooth real function $\phi$ on $X$ such that for any non-constant holomorphic map $f:X\to Y$ there holds
\begin{equation}\label{main.ineq}
\int_X\lambda_{\omega}e^{(n-1)\phi}\omega^n\le n\int_Xf^*\kappa_\eta \cdot e^{(n-1)\phi}f^*\eta\wedge\omega^{n-1},
\end{equation}
where \\
(1) $\lambda_{\omega}$ is the function of the first eigenvalue of the second Ricci curvature $Ric^{(2)}(\omega)$ of $\omega$ with respect to $\omega$, and \\
(2) $\kappa_\eta$ is a continuous real function on $Y$ such that for $y\in Y$, $\kappa_\eta(y)$ is the maximal value of holomorphic bisectional curvature of $\eta$ at $y$ when $\eta$ is not K\"ahlerian, and is the ``modified" maximal value of holomorphic sectional curvature of $\eta$ at $y$ when $\eta$ is K\"ahlerian (see Notation \ref{hc} for precise definition).
\par If furthermore $\omega$ is Gauduchon, the $\phi$ in \eqref{main.ineq} can be chosen to be any constant function.
\end{thm}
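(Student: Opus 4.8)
The plan is to combine a Chern--Lu (Bochner--type) inequality for the energy density of $f$ with an integration by parts, the function $\phi$ being chosen precisely so that $e^{\phi}\omega$ is Gauduchon, which is what makes the Chern Laplacian term drop out after integration. Concretely, if $n=1$ take $\phi\equiv0$, so that $e^{(n-1)\phi}\equiv1$ and \eqref{main.ineq} is conformally invariant. If $n\ge2$, let $\phi\in C^{\infty}(X,\mathbb R)$ be a Gauduchon potential of $\omega$: by Gauduchon's theorem such a $\phi$ exists, unique up to an additive constant, with $i\partial\bar\partial\big((e^{\phi}\omega)^{n-1}\big)=i\partial\bar\partial\big(e^{(n-1)\phi}\omega^{n-1}\big)=0$; observe that the weight $e^{(n-1)\phi}\omega^{n-1}$ occurring in \eqref{main.ineq} is exactly $(e^{\phi}\omega)^{n-1}$. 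If $\omega$ is itself Gauduchon, then $i\partial\bar\partial\big(e^{(n-1)c}\omega^{n-1}\big)=0$ for every constant $c$, which gives the last sentence of the theorem.

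Next I set up a pointwise inequality. Let $u:=\mathrm{tr}_{\omega}f^{*}\eta=n\,\dfrac{f^{*}\eta\wedge\omega^{n-1}}{\omega^{n}}\ge0$; since $f$ is non-constant holomorphic and $X$ is connected, the vanishing set $\{u=0\}=\{df=0\}$ is a proper analytic subset of $X$, hence Lebesgue--null, and $u\not\equiv0$. For $\varepsilon>0$ put $u_{\varepsilon}:=u+\varepsilon>0$, so that $\log u_{\varepsilon}\in C^{\infty}(X)$. The Chern--Lu formula for $f:(X,\omega)\to(Y,\eta)$, together with the usual estimates $|\nabla\partial f|^{2}_{\omega}\ge|\partial u|^{2}_{\omega}/u$ and $\frac{|\partial u|^{2}_{\omega}}{u\,u_{\varepsilon}}\ge\frac{|\partial u|^{2}_{\omega}}{u_{\varepsilon}^{2}}$, gives, on all of $X$,
\[
\Delta_{\omega}\log u_{\varepsilon}\ \ge\ \frac{\langle\mathrm{Ric}^{(2)}(\omega),f^{*}\eta\rangle_{\omega}+\mathcal T}{u_{\varepsilon}}\ -\ \frac{(\kappa_{\eta}\circ f)\,u^{2}}{u_{\varepsilon}},
\]
where $\Delta_{\omega}$ is the Chern Laplacian, the target curvature contribution is bounded using the definition of $\kappa_{\eta}$ in Notation \ref{hc} (the maximal holomorphic bisectional curvature of $\eta$ when $\eta$ is non-K\"ahler, and the ``modified'' maximal holomorphic sectional curvature, incorporating Royden's polarization constant, when $\eta$ is K\"ahler), and $\mathcal T$ collects the torsion terms of $\omega$ and vanishes identically when $\omega$ is K\"ahler. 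Finally, since $\mathrm{Ric}^{(2)}(\omega)\ge\lambda_{\omega}\,\omega$ and $f^{*}\eta\ge0$ one has $\langle\mathrm{Ric}^{(2)}(\omega),f^{*}\eta\rangle_{\omega}\ge\lambda_{\omega}\,\mathrm{tr}_{\omega}f^{*}\eta=\lambda_{\omega}u$.

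Now multiply the displayed inequality by $e^{(n-1)\phi}$ and integrate over $X$. Since $\log u_{\varepsilon}$ is smooth, integrating by parts twice and invoking the Gauduchon condition gives $\int_{X}\Delta_{\omega}\log u_{\varepsilon}\,e^{(n-1)\phi}\omega^{n}=n\int_{X}\log u_{\varepsilon}\,i\partial\bar\partial\big(e^{(n-1)\phi}\omega^{n-1}\big)=0$. Grouping $\mathcal T$ with $\Delta_{\omega}$ into the natural second-order operator attached to the weight $(e^{\phi}\omega)^{n-1}$ and integrating by parts once more, the torsion contribution $\int_{X}\mathcal T\,u_{\varepsilon}^{-1}e^{(n-1)\phi}\omega^{n}$ likewise vanishes, precisely because the Lee form of $e^{\phi}\omega$ is coclosed. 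Hence
\[
0\ \ge\ \int_{X}\frac{\lambda_{\omega}u}{u_{\varepsilon}}\,e^{(n-1)\phi}\omega^{n}\ -\ \int_{X}\frac{(\kappa_{\eta}\circ f)\,u^{2}}{u_{\varepsilon}}\,e^{(n-1)\phi}\omega^{n}.
\]
Letting $\varepsilon\to0$ (on the compact $X$ the integrands are dominated by $|\lambda_{\omega}|$ and $|\kappa_{\eta}\circ f|\,u$ respectively, and $u/u_{\varepsilon}\to1$, $u^{2}/u_{\varepsilon}\to u$ a.e.), dominated convergence yields $0\ge\int_{X}\lambda_{\omega}e^{(n-1)\phi}\omega^{n}-n\int_{X}(\kappa_{\eta}\circ f)\,e^{(n-1)\phi}f^{*}\eta\wedge\omega^{n-1}$, which is \eqref{main.ineq}. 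For $\omega$ Gauduchon the same computation with $\phi$ any constant goes through verbatim.

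The main obstacle is the vanishing of $\int_{X}\mathcal T\,u_{\varepsilon}^{-1}e^{(n-1)\phi}\omega^{n}$: one must identify exactly which torsion terms of $\omega$ the Chern--Lu identity produces on a Hermitian (non-K\"ahler) domain and verify that, weighted by $(e^{\phi}\omega)^{n-1}$, they assemble into an exact divergence (equivalently, into a term integrating against $i\partial\bar\partial\big(e^{(n-1)\phi}\omega^{n-1}\big)=0$) --- this is where the Gauduchon choice of $\phi$ is used a second time, and where one sees that the hypothesis that $\lambda_{\omega}$ be the first eigenvalue of $\mathrm{Ric}^{(2)}(\omega)$, rather than of some torsion-modified curvature, is the correct one. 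When $\omega$ is K\"ahler, $\mathcal T\equiv0$ and the argument collapses to the classical Chern--Lu/Yau computation together with $\int_{X}\Delta_{\omega}\log u_{\varepsilon}\,\omega^{n}=0$; the $\varepsilon$--regularization and the negligibility of $\{u=0\}$ are then entirely routine.
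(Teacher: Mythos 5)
Your overall skeleton (regularize $\log(\mathrm{tr}_\omega f^*\eta+\epsilon)$, apply Chern--Lu plus Royden's trick and a Kato-type inequality, integrate against the Gauduchon weight $e^{(n-1)\phi}\omega^n$ so the Laplacian term drops out, then let $\epsilon\to0$ by dominated convergence) is exactly the paper's argument. However, there is a genuine gap at the step you yourself flag as ``the main obstacle'': you insert an unspecified torsion term $\mathcal T$ into the Chern--Lu inequality for a non-K\"ahler domain metric and then claim that $\int_X \mathcal T\,u_\varepsilon^{-1}e^{(n-1)\phi}\omega^n=0$ ``because the Lee form of $e^\phi\omega$ is coclosed.'' As stated this is not a proof and, worse, the proposed mechanism cannot work: the factor $u_\varepsilon^{-1}$ is a nonconstant function depending on $f$, so a term of the form $\mathcal T\,u_\varepsilon^{-1}$ is not an exact divergence and cannot be paired against $\sqrt{-1}\partial\bar\partial\bigl(e^{(n-1)\phi}\omega^{n-1}\bigr)=0$ by integration by parts; the Gauduchon trick only kills $\int_X\Delta_\omega v\,e^{(n-1)\phi}\omega^n$ for a globally smooth $v$, here $v=\log u_\varepsilon$.

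The resolution, which is how the paper proceeds, is that no such torsion term arises in the first place: if $\Delta_\omega$ is the complex (Chern) Laplacian $\mathrm{tr}_\omega\sqrt{-1}\partial\bar\partial$ and the domain curvature is measured by the \emph{second} Ricci curvature $Ric^{(2)}(\omega)$ --- precisely the curvature entering $\lambda_\omega$ in the statement --- then the Chern--Lu identity reads
\begin{equation*}
\Delta_\omega\,\mathrm{tr}_\omega f^*\eta
= R^{(2)}_{k\bar l}g^{\bar qk}g^{\bar lp}\eta_{\alpha\bar\beta}f^\alpha_p\overline{f^\beta_q}
- R^\eta_{\alpha\bar\beta\gamma\bar\delta}\bigl(g^{\bar ji}f^\alpha_i\overline{f^\beta_j}\bigr)\bigl(g^{\bar lk}f^\gamma_k\overline{f^\delta_l}\bigr)
+ |\nabla df|^2,
\end{equation*}
with no extra domain torsion terms (this is the content of \cite[Lemma 4.1]{YZ}, cited in the paper). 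So the step you leave unverified is not something to be cancelled by a second use of the Gauduchon condition; it must be shown to be absent, by doing the Chern--Lu computation with the Chern connection and identifying the domain curvature contribution as $Ric^{(2)}$. Once you replace your inequality ``with $\mathcal T$'' by this exact formula, the rest of your argument (Royden's bound giving $\kappa_\eta$, $Ric^{(2)}(\omega)\ge\lambda_\omega\omega$, Kato off the zero set $V=\{\partial f=0\}$ extended by continuity, single use of the Gauduchon weight, dominated convergence) goes through and coincides with the paper's proof, including the observation that for $\omega$ Gauduchon one may take $\phi$ constant.
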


\begin{thm}\label{main_de}
Let $(X,\omega)$ and $(Y,\eta)$ be two $n$-dimensional Hermitian manifolds and $(X,\omega)$ compact. Then there exists a smooth real function $\psi$ on $X$ such that for any non-degenerate holomorphic map $f:X\to Y$ there holds
\begin{equation}\label{ineq_de}
\int_XR_{\omega}e^{(n-1)\psi}\omega^n\le n\int_Xe^{(n-1)\psi}f^*(Ric(\eta))\wedge\omega^{n-1},
\end{equation}
where $R_{\omega}$ is the Chern scalar curvature of $\omega$ and $Ric(\eta)$ is the Chern Ricci curvature of $\eta$.
\par If furthermore $\omega$ is Gauduchon, the $\psi$ in \eqref{ineq_de} can be chosen to be any constant function.
\end{thm}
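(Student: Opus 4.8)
The plan is to deduce \eqref{ineq_de} from an exact $\partial\bar\partial$-identity satisfied by the Jacobian of $f$, integrated against a suitable Gauduchon $(n-1,n-1)$-form; no curvature hypothesis on either manifold will be needed. First I would fix, via Gauduchon's theorem (available since $X$ is compact), a smooth real function $\psi$ \emph{depending only on $\omega$} such that $e^\psi\omega$ is Gauduchon, equivalently such that $\Theta:=e^{(n-1)\psi}\omega^{n-1}$ is a positive smooth $(n-1,n-1)$-form with $\sqrt{-1}\partial\bar\partial\Theta=0$; when $\omega$ is already Gauduchon one has $\sqrt{-1}\partial\bar\partial(e^{(n-1)c}\omega^{n-1})=0$ for every constant $c$, which gives the last sentence of the theorem. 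Since $R_\omega\,\omega^n=n\,Ric(\omega)\wedge\omega^{n-1}$, one has
\[
\int_X R_\omega\,e^{(n-1)\psi}\omega^n=n\int_X Ric(\omega)\wedge\Theta,\qquad n\int_X e^{(n-1)\psi}f^*(Ric(\eta))\wedge\omega^{n-1}=n\int_X f^*(Ric(\eta))\wedge\Theta,
\]
so \eqref{ineq_de} is equivalent to $\int_X Ric(\omega)\wedge\Theta\le\int_X f^*(Ric(\eta))\wedge\Theta$.

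Next, given a non-degenerate holomorphic map $f:X\to Y$, I would set $u:=(f^*\eta)^n/\omega^n\ge 0$. Non-degeneracy forces $u\not\equiv 0$, so $u>0$ away from a (possibly empty) analytic hypersurface $Z\subset X$ — the zero divisor of the holomorphic Jacobian of $f$ — and $\log u\in L^1(X)$, since its only singularities, along $Z$, are logarithmic. Writing $\log u=\log\det(\eta\circ f)+\log|Jf|^2-\log\det\omega$ in local holomorphic coordinates, where $Jf$ denotes the Jacobian determinant, and applying $\sqrt{-1}\partial\bar\partial$ together with the Lelong--Poincar\'e formula for $\log|Jf|^2$, one obtains the global identity of currents
\[
\sqrt{-1}\partial\bar\partial\log u=Ric(\omega)-f^*(Ric(\eta))+2\pi[Z],
\]
with $[Z]\ge 0$ the current of integration over $Z$ counted with multiplicity. (For $n=1$ this identity, once integrated, is precisely Riemann--Hurwitz.)

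Finally I would pair the identity with $\Theta$. Because $\log u\in L^1(X)$, $\Theta$ is smooth, and $\sqrt{-1}\partial\bar\partial\Theta=0$, integration by parts in the sense of currents gives $\int_X\sqrt{-1}\partial\bar\partial\log u\wedge\Theta=\pm\int_X\log u\,\sqrt{-1}\partial\bar\partial\Theta=0$, hence
\[
0=\int_X\big(Ric(\omega)-f^*(Ric(\eta))\big)\wedge\Theta+2\pi\int_Z\Theta.
\]
Since $\Theta$ restricts to a nonnegative volume form on the smooth locus of $Z$ and $[Z]$ is effective, $\int_Z\Theta\ge 0$, whence $\int_X Ric(\omega)\wedge\Theta\le\int_X f^*(Ric(\eta))\wedge\Theta$, which is \eqref{ineq_de} by the reformulation of the first paragraph. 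The argument is formal apart from one point needing care — the justification of the integration by parts and of the defect term $2\pi[Z]$: one must confirm that $\log u$ is indeed $L^1$ and that its distributional $\partial\bar\partial$ automatically carries the positive contribution $2\pi[Z]$, or, equivalently, excise an $\varepsilon$-tube about $Z$, apply Stokes' theorem on the complement, and verify that the boundary integrals tend to $0$ as $\varepsilon\to 0$ because the singularity of $\log u$ is merely logarithmic while $Z$ has real codimension two. This bookkeeping is standard, so I do not anticipate a serious obstacle beyond it.
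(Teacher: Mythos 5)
Your argument is correct, but it is a genuinely different route from the paper's. The paper never invokes currents: it works with the smooth perturbation $\log(u+\epsilon)$, applies the Chern--Lu identity $\Delta_\omega\log u=R_\omega-\mathrm{tr}_\omega f^*Ric(\eta)$ off the degeneracy set $W=\{\det J(f)=0\}$ to get the pointwise inequality $\Delta_\omega\log(u+\epsilon)\ge\frac{u}{u+\epsilon}\left(R_\omega-\mathrm{tr}_\omega f^*Ric(\eta)\right)$ on all of $X$, integrates against $e^{(n-1)\psi}\omega^n$ using the Gauduchon property to kill the Laplacian term, and finishes with Lebesgue's Dominated Convergence Theorem as $\epsilon\to0$. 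You instead observe that $\log u$ is $L^1$ with only logarithmic singularities along the Jacobian divisor $Z$ and use Lelong--Poincar\'e to get the exact current identity $\sqrt{-1}\partial\bar\partial\log u=Ric(\omega)-f^*Ric(\eta)+2\pi[Z]$, then pair with the $\partial\bar\partial$-closed form $\Theta=e^{(n-1)\psi}\omega^{n-1}$; all the ingredients you flag as needing care (local integrability of $\log|\det J(f)|^2$, the positivity and finite mass of $[Z]$, the duality $\langle\partial\bar\partial\log u,\Theta\rangle=\langle\log u,\partial\bar\partial\Theta\rangle$) are standard pluripotential facts, so the proof closes. What your approach buys is more than the stated inequality: an identity with explicit nonnegative defect $2\pi\int_Z\Theta$, hence a characterization of equality (equality iff $Z=\emptyset$, i.e. $f$ unramified) and, for $n=1$, Riemann--Hurwitz. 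What the paper's approach buys is uniformity: the same perturbation-plus-dominated-convergence scheme also proves Theorem \ref{main_const}, where $\Lambda=\mathrm{tr}_\omega f^*\eta$ satisfies only a differential \emph{inequality} and no exact $\partial\bar\partial$-identity of your type is available, so your method does not transfer to that case.
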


\begin{rem}\label{rem_1}
These integral inequalities may be regarded as \emph{effective} obstructions for a holomorphic map being constant or totally degenerate, and they make the aforementioned philosophy in Subsection \ref{moti} more effective. In particular, inequality \eqref{main.ineq} implies a complete answer to the case (A) in Subsection \ref{moti}, i.e. \eqref{int_sl} holds in its setting (also see Subsection \ref{final_rem} for more general results).
\end{rem}

\subsection{The  case $X=\mathbb{CP}^n$ in Theorem \ref{main_const}}\label{subsect_cp1} Let's take a closer look at Theorem \ref{main_const} with the domain space $(\mathbb{CP}^n,\omega_{FS})$. 

Firstly, we recall that for the one-dimensional case, the following result was proved in \cite[Section 4, page 2938-2940]{ToZyg}.
\begin{prop}\cite{ToZyg}
Let $(Y,\eta)$ be a compact K\"ahler manifold and $f:\mathbb{CP}^1\to Y$ a non-constant holomorphic map. Then there holds
\begin{equation}\label{tz_1}
(\sup_YH_\eta)\cdot\int_{\mathbb{CP}^1}f^*\eta\ge\frac\pi8.
\end{equation}
If furthermore $f:\mathbb{CP}^1\to Y$ is a holomorphic embedding, then
\begin{equation}\label{tz_2}
(\sup_YH_\eta)\cdot\int_{\mathbb{CP}^1}f^*\eta\ge2\pi\cdot\chi(\mathbb{CP}^1)=4\pi.
\end{equation}
\end{prop}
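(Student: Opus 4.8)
The plan is to obtain both inequalities from the Gauss equation for complex curves combined with Gauss--Bonnet, handling the general (non-immersed) case by passing to currents. Note first that $\sup_Y H_\eta>0$ is automatic: by the classical rigidity theorem recalled in the introduction (positive Ricci domain, here $(\mathbb{CP}^1,\omega_{FS})$, mapping to a target of nonpositive holomorphic sectional curvature), no non-constant holomorphic map exists when $\sup_Y H_\eta\le 0$, so there is nothing to prove in that case.

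I would treat first the embedding case \eqref{tz_2}. Here $C:=f(\mathbb{CP}^1)$ is a smooth rational curve in $(Y,\eta)$ and $f^*\eta$ is the metric induced on $\mathbb{CP}^1\cong C$ by $\eta$. The Gauss equation for a complex submanifold of a K\"ahler manifold writes the curvature of the induced metric as the curvature of $\eta$ along the tangent direction of $C$ minus the squared norm of the second fundamental form; since on a Riemann surface the holomorphic sectional curvature agrees with the Gaussian curvature (with the K\"ahler form as the Riemannian area form), this yields the pointwise bound $K_{f^*\eta}\le\sup_Y H_\eta$ on $\mathbb{CP}^1$. Integrating and invoking Gauss--Bonnet,
\[
2\pi\chi(\mathbb{CP}^1)=\int_{\mathbb{CP}^1}K_{f^*\eta}\,f^*\eta\ \le\ (\sup_Y H_\eta)\int_{\mathbb{CP}^1}f^*\eta,
\]
which is precisely \eqref{tz_2}.

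For a general non-constant $f$ the only new feature is that $f$ may ramify along a finite set $Z$; I would write $f^*\eta=u\,\omega_{FS}$ with $u\ge0$ smooth and $u>0$ exactly on $\mathbb{CP}^1\setminus Z$. On $\mathbb{CP}^1\setminus Z$ the map is an immersion, so the estimate above gives $Ric(f^*\eta)=K_{f^*\eta}f^*\eta\le(\sup_Y H_\eta)f^*\eta$, and with $Ric(f^*\eta)=Ric(\omega_{FS})-i\partial\bar\partial\log u$ this becomes
\[
i\partial\bar\partial\log u\ \ge\ Ric(\omega_{FS})-(\sup_Y H_\eta)\,f^*\eta\qquad\text{on }\mathbb{CP}^1\setminus Z.
\]
A local computation in holomorphic coordinates shows $\log u=m_p\log|z|^2+(\text{smooth})$ near each $p\in Z$ for some integer $m_p\ge1$, so $i\partial\bar\partial\log u=(\text{the smooth form on }\mathbb{CP}^1\setminus Z)+2\pi\sum_{p\in Z}m_p\,\delta_p$ as currents on $\mathbb{CP}^1$; in particular the displayed inequality holds as an inequality of $(1,1)$-currents on all of $\mathbb{CP}^1$, the extra atomic terms only helping. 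Since $\log u\in L^1(\mathbb{CP}^1)$ one has $\int_{\mathbb{CP}^1}i\partial\bar\partial\log u=0$, and integrating the current inequality over $\mathbb{CP}^1$ yields
\[
0\ \ge\ \int_{\mathbb{CP}^1}Ric(\omega_{FS})-(\sup_Y H_\eta)\int_{\mathbb{CP}^1}f^*\eta\ =\ 2\pi\chi(\mathbb{CP}^1)-(\sup_Y H_\eta)\int_{\mathbb{CP}^1}f^*\eta,
\]
that is, $(\sup_Y H_\eta)\int_{\mathbb{CP}^1}f^*\eta\ge 2\pi\chi(\mathbb{CP}^1)=4\pi$. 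This also covers the embedding case \eqref{tz_2} and is in fact stronger than \eqref{tz_1}; the smaller constant $\tfrac\pi8$ obtained in \cite{ToZyg} comes from a different, Schwarz-Lemma based argument.

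The step I expect to need the most care is the passage through the ramification locus $Z$, where the curvature identities are not valid: one must check that $\log u$ has only logarithmic singularities there, so that $i\partial\bar\partial\log u$ is a well-defined current with $L^1$ potential and $\int_{\mathbb{CP}^1}i\partial\bar\partial\log u=0$ is legitimate, and that the atomic contributions carry the sign that strengthens, rather than weakens, the estimate. A route sidestepping currents is to excise small discs $B_\varepsilon(p)$, $p\in Z$, apply Stokes on the complement, and let $\varepsilon\to0$, the boundary integrals tending to the nonnegative quantities $2\pi m_p$.
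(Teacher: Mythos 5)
Your argument is correct, and it in fact proves more than the quoted statement: it gives the sharp bound $(\sup_YH_\eta)\int_{\mathbb{CP}^1}f^*\eta\ge 2\pi\chi(\mathbb{CP}^1)=4\pi$ (indeed $\ge 4\pi+2\pi\sum_{p\in Z}m_p$ if one keeps the ramification atoms) for \emph{every} non-constant $f$, i.e.\ exactly Proposition \ref{prop_cp1}, which subsumes both \eqref{tz_1} and \eqref{tz_2}. The route is genuinely different from the proofs this proposition is quoted with: in \cite{ToZyg}, \eqref{tz_2} is obtained precisely as in your embedded case (Gauss equation plus Gauss--Bonnet), but the general inequality \eqref{tz_1} is proved by a Schwarz-Lemma computation combined with an $\epsilon$-regularity argument, which is why the constant there is the non-geometric $\pi/8$; the present paper's improvement to $4\pi$ is instead deduced from the general integral inequality of Theorem \ref{main_const}, whose proof integrates the Chern--Lu inequality (with Royden's trick) against a Gauduchon-weighted volume form and handles the degeneracy set $\{df=0\}$ by the perturbation $\log(\Lambda+\epsilon)$ and dominated convergence. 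Your treatment of the degeneracy set is via Poincar\'e--Lelong: $\log u$ has only logarithmic poles of order $m_p\ge1$ at the ramification points, so $i\partial\bar\partial\log u$ is a well-defined current with nonnegative atoms $2\pi m_p\delta_p$ and zero total mass, and the pointwise Gauss-equation bound off $Z$ integrates to the conclusion. What your approach buys is a short, sharp, purely one-dimensional proof that also quantifies the ramification contribution; what it gives up is generality, since it relies on $f^*\eta$ being a single function $u$ times $\omega_{FS}$ with $\log u$ having analytic logarithmic singularities, which has no direct analogue when the domain is higher-dimensional or merely Hermitian --- the situation Theorem \ref{main_const} is built for. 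Two small points of care, neither a gap: the identification of the ``Gaussian curvature'' of $f^*\eta$ with its holomorphic sectional curvature should be taken in the Chern normalization used here, so that $Ric(f^*\eta)=H^{f^*\eta}\,f^*\eta$ and $\int Ric=2\pi\chi$, which is exactly what makes your Gauss-equation bound and Gauss--Bonnet compatible; and in the excision variant the boundary terms carry the orientation of $\partial(\mathbb{CP}^1\setminus\bigcup_p B_\varepsilon(p))$, so they enter as $-2\pi m_p$ on the left-hand side, i.e.\ they strengthen the inequality, consistent with the current formulation.
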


By arguments in \cite{ToZyg}, \eqref{tz_2}, which serves as an intuition for \eqref{tz_1}, was a consequence of Gauss-Bonnet Theorem, and \eqref{tz_1} was proved by using Schwarz Lemma calculation and $\epsilon$-regularity arguments.
In particular, the lower bound $4\pi$ in the latter inequality \eqref{tz_2} is automatically of geometric interpretation (i.e. Euler characteristic of $\mathbb{CP}^1$), while the lower bound $\frac\pi8$ in the inequality \eqref{tz_1}, coming from an analytic argument, seems not of clear geometric nature. It is then very natural to ask: \emph{can we have a ``geometric" positive lower bound in \eqref{tz_1}, and can we improve the lower bound in \eqref{tz_1} to $4\pi$? Furthermore, does there exist higher dimensional analog of \eqref{tz_1}?}\\

Using a special case in Theorem \ref{main_const}, i.e. setting $(X,\omega)=(\mathbb{CP}^1,\omega_{FS})$, we answer these questions as follows.

\begin{prop}\label{prop_cp1}
Let $(Y,\eta)$ be a compact K\"ahler manifold and $f:\mathbb{CP}^1\to Y$ a non-constant holomorphic map. Then there holds
\begin{equation}\label{tz_3}
(\sup_YH_\eta)\cdot\int_{\mathbb{CP}^1}f^*\eta\ge2\pi\cdot\chi(\mathbb{CP}^1)=4\pi.
\end{equation}
\end{prop}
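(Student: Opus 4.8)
The plan is to apply Theorem~\ref{main_const} to the one-dimensional compact domain $(X,\omega)=(\mathbb{CP}^1,\omega_{FS})$ and then evaluate the left-hand side of inequality~\eqref{main.ineq} by Gauss-Bonnet. First I would note that $n=\dim_{\mathbb C}\mathbb{CP}^1=1$, so $(n-1)\phi\equiv 0$, the weight $e^{(n-1)\phi}$ is identically $1$, and $\omega_{FS}^{n-1}\equiv 1$ (and, incidentally, every Hermitian metric on a compact Riemann surface is K\"ahler, hence Gauduchon, so the refined form of Theorem~\ref{main_const} applies as well). Thus, for the given non-constant holomorphic $f$, Theorem~\ref{main_const} reduces to
\[
\int_{\mathbb{CP}^1}\lambda_{\omega_{FS}}\,\omega_{FS}\ \le\ \int_{\mathbb{CP}^1}(f^*\kappa_\eta)\,f^*\eta .
\]

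Next I would compute the left-hand side. In complex dimension one, $Ric^{(2)}(\omega_{FS})$ is simply the Chern Ricci form of $\omega_{FS}$, and since the only eigenvalue of $Ric^{(2)}(\omega_{FS})$ with respect to $\omega_{FS}$ is $\lambda_{\omega_{FS}}$, one has the identity of real $(1,1)$-forms $\lambda_{\omega_{FS}}\,\omega_{FS}=Ric^{(2)}(\omega_{FS})$. As the de Rham class of this form is $2\pi c_1(\mathbb{CP}^1)$, the Gauss-Bonnet theorem (equivalently, the Chern class computation) gives
\[
\int_{\mathbb{CP}^1}\lambda_{\omega_{FS}}\,\omega_{FS}=2\pi\,\langle c_1(\mathbb{CP}^1),[\mathbb{CP}^1]\rangle=2\pi\,\chi(\mathbb{CP}^1)=4\pi ,
\]
which is precisely the geometric value sought above and is independent of how $\omega_{FS}$ is normalized.

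For the right-hand side I would use that $f^*\eta\ge 0$ as a $(1,1)$-form on $\mathbb{CP}^1$ (since $f$ is holomorphic and $\eta>0$), together with the pointwise bound $\kappa_\eta\le\sup_Y\kappa_\eta$ and the fact that, by the definition of $\kappa_\eta$ for K\"ahler $\eta$ in Notation~\ref{hc}, $\sup_Y\kappa_\eta\le\max\{\sup_YH_\eta,0\}$. The inequalities above already force $\int_{\mathbb{CP}^1}(f^*\kappa_\eta)f^*\eta\ge 4\pi>0$, hence $\sup_YH_\eta>0$ (equivalently, this is the classical rigidity theorem of Yau and Royden, which rules out a non-constant $f$ when $\sup_YH_\eta\le 0$), so $\sup_Y\kappa_\eta\le\sup_YH_\eta$. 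Chaining the three estimates yields
\[
4\pi\ \le\ \int_{\mathbb{CP}^1}(f^*\kappa_\eta)\,f^*\eta\ \le\ \big(\sup_Y\kappa_\eta\big)\int_{\mathbb{CP}^1}f^*\eta\ \le\ \big(\sup_YH_\eta\big)\int_{\mathbb{CP}^1}f^*\eta ,
\]
which is exactly \eqref{tz_3}.

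I expect the only point needing care to be the comparison $\sup_Y\kappa_\eta\le\sup_YH_\eta$, i.e.\ checking that the ``modification'' incorporated into $\kappa_\eta$ in Notation~\ref{hc} does not raise the curvature above $\sup_YH_\eta$ once the latter is nonnegative. This ought to be transparent here because the domain is one-dimensional: any holomorphic map out of $\mathbb{CP}^1$ has rank at most one, and for a rank-one map the target-curvature term in the Schwarz-type computation underlying Theorem~\ref{main_const} is exactly the holomorphic sectional curvature of $\eta$ along the image direction, so no loss factor of the type $\frac{m+1}{2m}$ (with $m=\dim_{\mathbb C}Y$, as in \eqref{sl}) enters. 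Everything else, namely Gauss-Bonnet on the left and semipositivity of $f^*\eta$ on the right, is routine.
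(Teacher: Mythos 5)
Your proof is correct and follows essentially the same route as the paper: apply Theorem~\ref{main_const} with $(X,\omega)=(\mathbb{CP}^1,\omega_{FS})$, evaluate $\int_{\mathbb{CP}^1}\lambda_{\omega_{FS}}\omega_{FS}=2\pi\chi(\mathbb{CP}^1)=4\pi$ by Gauss--Bonnet, and bound the right-hand side using $f^*\eta\ge 0$ and $\kappa_\eta\le\sup_YH_\eta$. The only difference is that you spell out the step the paper leaves implicit, namely that $\kappa_\eta\le\max\{\sup_YH_\eta,0\}$ by Notation~\ref{hc} and that positivity of the left-hand side forces $\sup_YH_\eta>0$, which is exactly the right justification (the closing remark about rank-one maps is unnecessary for this).
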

Indeed, for $(\mathbb{CP}^1,\omega_{FS})$ with normalization $\int_{\mathbb{CP}^1}\omega_{FS}=1$,  we know $\lambda_{\omega_{FS}}\equiv4\pi$ is the constant Gaussian curvature $K_{\omega_{FS}}$ and hence by Gauss-Bonnet theorem, 
$$\int_{\mathbb{CP}^1}\lambda_{\omega_{FS}}\omega_{FS}=\int_{\mathbb{CP}^1}K_{\omega_{FS}}\omega_{FS}=2\pi\chi(\mathbb{CP}^1).$$ Then Proposition \ref{prop_cp1} follows immediately from Theorem \ref{main_const}. 

Observe that the lower bound $4\pi$ is somehow \emph{optimal}, as can be seen by choosing $(Y,\eta)=(\mathbb{CP}^1,\omega_{FS})$ and $f=Id_{\mathbb{CP}^1}$.

The above arguments can be easily generalized to the cace with domain $(\mathbb{CP}^n,\omega_{FS})$, where $\int_{\mathbb{CP}^n}\omega_{FS}^n=Vol(\mathcal O_{\mathbb{CP}^n}(1))=1$ and $\lambda_{\omega_{FS}}\equiv2\pi(n+1)$. Namely, we have

\begin{prop}\label{prop_cpn}
Let $(Y,\eta)$ be a compact K\"ahler manifold and $f:\mathbb{CP}^n\to Y$ a non-constant holomorphic map. Then there holds
\begin{equation}
(\sup_YH_\eta)\cdot\int_{\mathbb{CP}^n}f^*\eta\wedge\omega_{FS}^{n-1}\ge\frac{2\pi(n+1)}{n}\cdot Vol(\mathcal O_{\mathbb{CP}^n}(1))=\frac{2\pi(n+1)}{n}\nonumber.
\end{equation}
\end{prop}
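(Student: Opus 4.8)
The plan is to derive Proposition \ref{prop_cpn} as a direct corollary of Theorem \ref{main_const}, exactly as the text indicates is done for the case $n=1$. The starting point is the observation that $(\mathbb{CP}^n,\omega_{FS})$ with the normalization $\int_{\mathbb{CP}^n}\omega_{FS}^n=1$ is a Gauduchon (indeed K\"ahler) metric, so by the last sentence of Theorem \ref{main_const} we may take $\phi\equiv 0$ in \eqref{main.ineq}. Thus for any non-constant holomorphic map $f:\mathbb{CP}^n\to Y$ we get
\begin{equation}\label{prop_cpn_step1}
\int_{\mathbb{CP}^n}\lambda_{\omega_{FS}}\,\omega_{FS}^n\le n\int_{\mathbb{CP}^n}f^*\kappa_\eta\cdot f^*\eta\wedge\omega_{FS}^{n-1}.
\end{equation}

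The next step is to identify the two sides. For the left-hand side, since $\omega_{FS}$ is the Fubini-Study metric, its second Ricci curvature agrees with its Ricci curvature and is a constant multiple of $\omega_{FS}$; with the chosen normalization the first eigenvalue function $\lambda_{\omega_{FS}}$ is the constant $2\pi(n+1)$ (this is the standard computation: $\mathrm{Ric}(\omega_{FS})=2\pi(n+1)\omega_{FS}$ under $\int_{\mathbb{CP}^n}\omega_{FS}^n=1$, which in the case $n=1$ recovers the constant Gaussian curvature $4\pi$). Hence the left-hand side equals $2\pi(n+1)\int_{\mathbb{CP}^n}\omega_{FS}^n=2\pi(n+1)\cdot Vol(\mathcal O_{\mathbb{CP}^n}(1))=2\pi(n+1)$. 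For the right-hand side, since $\eta$ is K\"ahler, by Notation \ref{hc} the function $\kappa_\eta$ is the ``modified'' maximal holomorphic sectional curvature, which is pointwise bounded above by $\sup_Y H_\eta$ (up to the normalizing factor built into the definition of $H_\eta$ versus $\kappa_\eta$); bounding $f^*\kappa_\eta\le\sup_Y H_\eta$ pointwise and pulling the constant out of the integral yields the upper bound $n(\sup_Y H_\eta)\int_{\mathbb{CP}^n}f^*\eta\wedge\omega_{FS}^{n-1}$. Note that $\int_{\mathbb{CP}^n}f^*\eta\wedge\omega_{FS}^{n-1}\ge 0$ always, and it is strictly positive precisely because $f$ is non-constant, which is also what forces $\sup_Y H_\eta>0$.

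Combining these two identifications with \eqref{prop_cpn_step1} gives $2\pi(n+1)\le n(\sup_Y H_\eta)\int_{\mathbb{CP}^n}f^*\eta\wedge\omega_{FS}^{n-1}$, i.e.
\begin{equation}
(\sup_Y H_\eta)\cdot\int_{\mathbb{CP}^n}f^*\eta\wedge\omega_{FS}^{n-1}\ge\frac{2\pi(n+1)}{n},\nonumber
\end{equation}
which is the claimed inequality. Since the right-hand side is positive, we also recover along the way the rigidity statement that $\sup_Y H_\eta>0$ whenever a non-constant such $f$ exists.

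The only genuinely delicate point, and the one I would check most carefully, is the precise relationship between the function $\kappa_\eta$ appearing in Theorem \ref{main_const} and the quantity $\sup_Y H_\eta$ used in the proposition: the word ``modified'' in the statement of Notation \ref{hc} signals that $\kappa_\eta$ is not literally the maximal holomorphic sectional curvature but a rescaled version of it, and one must confirm that the constants match up so that the inequality $f^*\kappa_\eta\le\sup_Y H_\eta$ holds with exactly the claimed constant $\frac{2\pi(n+1)}{n}$ and no stray dimensional factor. Everything else — the curvature computation for $\omega_{FS}$, the Gauduchon reduction to $\phi\equiv0$, the positivity of $\int_{\mathbb{CP}^n}f^*\eta\wedge\omega_{FS}^{n-1}$ — is routine. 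The optimality remark for $n=1$ (taking $(Y,\eta)=(\mathbb{CP}^1,\omega_{FS})$ and $f=\mathrm{Id}$) would presumably generalize to showing the constant $\frac{2\pi(n+1)}{n}$ is attained by $f=\mathrm{Id}_{\mathbb{CP}^n}$, which provides a useful consistency check on the constant.
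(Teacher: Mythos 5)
Your proposal is correct and follows essentially the paper's own route: the paper derives Proposition \ref{prop_cpn} exactly by applying Theorem \ref{main_const} with $(X,\omega)=(\mathbb{CP}^n,\omega_{FS})$, $\int_{\mathbb{CP}^n}\omega_{FS}^n=1$, $\lambda_{\omega_{FS}}\equiv2\pi(n+1)$, and $\phi$ constant since $\omega_{FS}$ is K\"ahler (hence Gauduchon). The one point you flag, comparing $\kappa_\eta$ with $\sup_YH_\eta$, is harmless but worth stating carefully: by Notation \ref{hc}(2) one always has $\kappa_\eta\le\max\{H^\eta,0\}$ pointwise, so either $\sup_YH_\eta>0$, in which case $f^*\kappa_\eta\le\sup_YH_\eta$ and your estimate goes through with no stray factor, or $\sup_YH_\eta\le0$, in which case the right-hand side of \eqref{main.ineq} is nonpositive while the left-hand side equals $2\pi(n+1)>0$, a contradiction, so this case cannot occur for a non-constant $f$.
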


\subsection{Outline of applications} As described in Subsection \ref{moti}, the effectiveness results Theorems \ref{main_const} and \ref{main_de} may have applications in several problems. Let's outline some of these applications.
\par A classical differential geometric approach in proving rigidity theorem for holomorphic map (i.e. proving constancy of a holomorphic map) makes use of Chern-Lu formula, pointwise curvature signs and the maximum principle arguments. Roughly speaking, this approach makes use of pointwise curvature signs to destroy (pointwise) Schwarz-type Lemma and hence gets the constancy of holomorphic maps. 
\par As applications of Theorem \ref{main_const}, we shall prove rigidity theorems under weaker curvature conditions; in particular, the curvatures are not necessarily pointwise signed. In fact, given Theorem \ref{main_const}, to prove constancy of holomorphic map, it suffices to destroy the integral inequality \eqref{main.ineq}, which can be achieved by just assuming, for example, suitable signs for curvature in certain \emph{integral} sense or ``almost" sense. Therefore, we obtain new rigidity theorems for holomorphic maps without assuming any pointwise curvature signs for both the domain and target manifolds. 
\par Similarly, Theorem \ref{main_de} can be applied to prove degeneracy theorems for holomorphic maps without assuming any pointwise curvature signs for both the domain and target manifolds. 

\par Moreover, our Theorem \ref{main_const} implies a criterion for type \textrm{IIb} singularities of the K\"ahler-Ricci flow, generalizing a result of Tosatti-Y.G. Zhang \cite[Proposition 1.4]{ToZyg}. 

\par Also, the equality case in Theorem \ref{main_const} is characterized in some special settings.

\par The details of these applications will be given in Section \ref{app}.

\subsection{Organization} In the next section, we will introduce some necessary notations and results in complex geometry. Then we prove our main results Theorems \ref{main_const} and \ref{main_de} in Section \ref{pf}. Finally, in Section \ref{app}, we provide several applications, including several rigidity and degeneracy theorems for holomorphic maps, a criterion for type \textrm{IIb} singularities of the K\"ahler-Ricci flow and characterization of equality case in some settings.

\section{Preliminaries} 
\subsection{Curvatures in complex geometry} Let $(X,\omega)$ be a Hermitian manifold of dimension $dim_{\mathbb C}X=n$, where $\omega=\omega_g$ is the K\"ahler form of a Hermitian metric $g$. In a local holomorphic chart $(z^1,...,z^n)$, we write
$$\omega=\sqrt{-1}g_{i\bar j}dz^i\wedge d\bar z^j.$$
Recall the curvature tensor $R^\omega=\{R^\omega_{i\bar jk\bar l}\}$ of the Chern connection is given by 
$$R^\omega_{i\bar jk\bar l}=-\frac{\partial^2g_{k\bar l}}{\partial z^i\partial\bar z^j}+g^{\bar qp}\frac{\partial g_{k\bar q}}{\partial z^i}\frac{\partial g_{p\bar l}}{\partial \bar z^j}.$$
Then the Chern Ricci curvature $Ric(\omega)=\sqrt{-1}R_{i\bar j}dz^i\wedge d\bar z^j$ and Chern scalar curvature $R_\omega$ are given by 
$$R_{i\bar j}=g^{\bar lk}R^\omega_{i\bar jk\bar l}$$
and 
$$R_\omega=g^{\bar ji}R_{i\bar j}$$
respectively.
Note that $R_{i\bar j}=-\frac{\partial^2\log \det(g_{p\bar q})}{\partial z^i\partial\bar z^j}$. \\
Also recall that the second Ricci curvature $Ric^{(2)}(\omega)$ is given by 
$$R_{k\bar l}^{(2)}=g^{\bar ji}R^\omega_{i\bar jk\bar l}.$$

\begin{notation}[Function of the first eigenvalue of the second Ricci curvature]\label{fe}
Let $(X,\omega)$ be a Hermitian manifold of $dim_{\mathbb C}X=n$. Define $\lambda_{\omega}$ to be the function of the first eigenvalue of the second Ricci curvature $Ric^{(2)}(\omega)$ of $\omega$  with respect to $\omega$, which is a continuous function on $X$ satisfying
$$Ric^{(2)}(\omega)\ge\lambda_\omega\omega.$$
For convenience, we call the integration $\int_X\lambda_{\omega}\omega^n$ the \emph{total first eigenvalue} of second Ricci curvature of $\omega$. 
\end{notation}

Given $x\in X$ and $W,V\in T^{1,0}_xX\setminus\{0\}$, the holomorphic bisectional curvature of $\omega$ at $x$ determined by $W,V$ is 
$$BK^\omega_{x}(W,V):=\frac{R^\omega(W,\overline W,V,\overline V)}{|W|^2_\omega|V|^2_\omega},$$ 
and the holomorphic sectional curvature of $\omega$ at $x$ in the direction $W$ is
$$H^\omega_x(W):=\frac{R(W,\overline W,W,\overline W)}{|W|^4_\omega}.$$

\begin{notation}[Supremum of holomorphic curvature]\label{hc}
Let $(X,\omega)$ be a Hermition manifold of $dim_{\mathbb C}X=n$ and $x\in X$. We set 
$$BK^\omega_x:=\sup \{BK^\omega_x(W,V)|W,V\in T^{1,0}_xX\setminus\{0\}\}$$
and 
$$H^\omega_x:=\sup \{H^\omega_x(W)|W\in T^{1,0}_xX\setminus\{0\}\}.$$ 
Then we define a continuous real function $\kappa_\omega$ for $(X,\omega)$ as follows.
\begin{itemize}
\item[(1)] when $\omega$ is not K\"ahlerian, we define $\kappa_\omega(x):=BK^\omega_x$;
\item[(2)] when $\omega$ is K\"ahlerian, we define $\kappa_\omega(x):=\rho(H^\omega_x)\cdot H^\omega_x$, where $\rho:\mathbb R\to\{\frac{n+1}{2n},1\}$ is a function with $\rho(s)=\frac{n+1}{2n}$ for $s\le0$ and $\rho(s)=1$ for $s>0$. 
\end{itemize}
\end{notation}

\begin{defn}[Compact K\"ahler manifold of almost nonpositive holomorphic sectional curvature \cite{Zys18}]\label{defn1}
Let $(X,\hat\omega)$ be a compact K\"ahler manifold.
\begin{itemize}
\item[(1)] Let $\alpha$ be a K\"ahler class on $X$. We define a number  $\mu_\alpha$ for $\alpha$ in the following way: \\
\centerline{$\mu_\alpha:=\inf\{\sup_XH^\omega|\omega$ is a K\"ahler metric in $\alpha\}$,}
where $H^\omega$ is the continuous function on $X$ with $H^\omega(x)=H^\omega_x$.
\item[(2)] We say $X$ is of \textbf{almost nonpositive holomorphic sectional curvature} if for any number $\epsilon>0$, there exists a K\"ahler class $\alpha_\epsilon$ on $X$ such that $\mu_{\alpha_\epsilon}\alpha_\epsilon<\epsilon[\hat\omega]$.
\end{itemize}
\end{defn}
One may find more motivations and discussions about the above almost nonpositivity notion for holomorphic sectional curvature in \cite{Zys18}. Here we mention that it is not a pointwise notion, but a notion at the level of $(1,1)$-classes, and the number $\mu_\alpha$, up to multiplying a constant factor depending only on dimension of manifold, turns out to be an upper bound for the nef threshold of $\alpha$ \cite[Proposition 1.9]{Zys18}.

\subsection{Royden's trick} Let $f:(X,\omega)\to (Y,\eta)$ be a holomorphic map between two Hermitian manifolds and $dim_{\mathbb C}X=n$ and $dim_{\mathbb C}Y=m$. Given $x\in X$ and holomorphic charts $(z^1,...,z^n)$ on $X$ centered at $x$ and $(w^1,...,w^m)$ on $Y$ centered at $f(x)$. Write $f=(f^1,...,f^m)$ and $f^\alpha_i:=\frac{\partial f^\alpha}{\partial z^i}$, $1\le i\le n, 1\le\alpha\le m$, in these local charts. Assuming that $\eta$ is K\"ahler, Royden \cite[page 552]{R} proved that at $x$,
$$R^\eta_{\alpha\bar\beta\gamma\bar\delta}\left(g^{\bar ji}f^{\alpha}_i\overline{f^\beta_j}\right)\left(g^{\bar lk}f^{\gamma}_k\overline{f^\delta_l}\right)\le f^*\kappa_\eta \cdot (tr_\omega f^*\eta)^2,$$
where $\kappa_\eta$ is the function defined in Notation \ref{hc} (2).

\subsection{Gauduchon metrics}\label{subsect_g} Let $X$ be a compact complex manifold of $\dim_{\mathbb C}X=n$. A Hermitian metric $\omega$ on $X$ is called Gauduchon if
$$\partial\bar\partial(\omega^{n-1})=0.$$
Obviously, for a Gauduchon metric $\omega$ and a smooth function $u$ on $X$ we have
$$\int_X(\Delta_\omega u)\omega^n=0,$$
where $\Delta_\omega u$ is the complex Laplacian defined by $\Delta_\omega u=tr_\omega(\sqrt{-1}\partial\bar\partial u)$. 
A classical result of Gauduchon \cite{G} states that, for every Hermitian metric $\omega$, there is a $\phi\in C^\infty(X,\mathbb R)$ (unique up to scaling, when $n\ge2$) such that $e^\phi\omega$ is Gauduchon.

\subsection{Non-degenerate holomorphic maps} 
Let $f:X\to Y$ be a holomorphic map between two complex manifolds of the same dimension. If there exists some point $x\in X$ such that the Jacobian $J(f)$ of $f$ satisfies $|\det J(f)|(x)>0$, then we say $f$ is non-degenerate; otherwise, we say $f$ is totally degenerate.

\section{Proofs of integral inequalities}\label{pf}
In this section we prove Theorems \ref{main_const} and \ref{main_de}. Our proofs do \emph{not} involve any maximum principle arguments, since the curvatures of both the domain and target spaces may not be signed in pointwise sense. Instead, we will make use of a perturbation method involving Lebegue's Dominated Convergence Theorem.

\begin{proof}[Proof of Theorem \ref{main_const}]
We only consider the case that $\eta$ is a K\"ahler metric on $Y$, as the other case can be proved similarly. 
\par Let $\epsilon$ be an arbitrary positive number, $\Lambda=tr_\omega f^*\eta=|\partial f|^2=|df|^2$. Given an arbitrary $x\in X$, and holomorphic charts $(z^1,...,z^n)$ on $X$ centered at $x$ and $(w^1,...,w^m)$ on $Y$ centered at $f(x)$. Write $\omega=\sqrt{-1}g_{i\bar j}dz^i\wedge d\bar z^j$, $\eta=\sqrt{-1}\eta_{\alpha\bar\beta}dw^\alpha\wedge d\bar w^\beta$, $Ric^{(2)}(\omega)=\sqrt{-1}R^{(2)}_{i\bar j}dz^i\wedge d\bar z^j$, $f=(f^1,...,f^m)$ and $f^\alpha_i:=\frac{\partial f^\alpha}{\partial z^i}$, $1\le i\le n, 1\le\alpha\le m$, in these local charts. By direct computations and Chern-Lu formula \cite{Chern,L,Y} (also see \cite[Lemma 4.1]{YZ} for clearer and simpler discussions on Chern-Lu formula) we have
\begin{align}\label{cl}
\Delta_{\omega}\log (\Lambda+\epsilon)&=\frac{\Delta_\omega\Lambda}{\Lambda+\epsilon}-\frac{|\partial\Lambda|^2}{(\Lambda+\epsilon)^2}\nonumber\\
&=\frac{R^{(2)}_{k\bar l}g^{\bar qk}g^{\bar lp}\eta_{\alpha\bar\beta}f^\alpha_p\overline{f^\beta_q}-R^\eta_{\alpha\bar\beta\gamma\bar\delta}\left(g^{\bar ji}f^{\alpha}_i\overline{f^\beta_j}\right)\left(g^{\bar lk}f^{\gamma}_k\overline{f^\delta_l}\right)+|\nabla df|^2}{\Lambda+\epsilon}-\frac{|\partial\Lambda|^2}{(\Lambda+\epsilon)^2}\nonumber\\
\end{align}
By the definition of $\lambda_\omega$ in Notation \ref{fe} and Royden's trick, we get
\begin{align}\label{rigid_ineq}
\Delta_{\omega}\log (\Lambda+\epsilon)&\ge\lambda_\omega\cdot\frac{\Lambda}{\Lambda+\epsilon}-f^*\kappa_\eta\cdot\Lambda\cdot\frac{\Lambda}{\Lambda+\epsilon}+\frac{|\nabla df|^2}{\Lambda+\epsilon}-\frac{|\partial\Lambda|^2}{(\Lambda+\epsilon)^2}
\end{align}

For the last two terms in \eqref{rigid_ineq}, 
\begin{align}\label{est_ineq}
\frac{|\nabla df|^2}{\Lambda+\epsilon}-\frac{|\partial\Lambda|^2}{(\Lambda+\epsilon)^2}&=\frac{|\nabla df|^2}{\Lambda+\epsilon}-\frac{|\partial|df|^2|^2}{(\Lambda+\epsilon)^2}\nonumber\\
&=\frac{|\nabla df|^2}{\Lambda+\epsilon}-\frac{4|\partial|df||^2}{(\Lambda+\epsilon)}\frac{\Lambda}{\Lambda+\epsilon}\nonumber\\
&=\frac{|\nabla df|^2}{\Lambda+\epsilon}-\frac{|d|df||^2}{(\Lambda+\epsilon)}\frac{\Lambda}{\Lambda+\epsilon}\nonumber\\
&=\frac{\epsilon|\nabla df|^2}{(\Lambda+\epsilon)^2}+\frac{|\nabla df|^2-|d|df||^2}{(\Lambda+\epsilon)}\frac{\Lambda}{\Lambda+\epsilon}
\end{align}

Set $V:=\{x\in X|tr_{\omega}f^*\eta=0$ at $x\}=\{x\in X|\partial f=0$ at $x\}$, which, when $f$ is non-constant, is a proper subvariety (may be empty) of $Y$. Now, as in \cite[Lemma 4.2]{YZ}, we apply Kato inequality in \eqref{est_ineq} to conclude that, outside $V$, 
\begin{align}
\frac{|\nabla df|^2}{\Lambda+\epsilon}-\frac{|\partial\Lambda|^2}{(\Lambda+\epsilon)^2}\ge\frac{\epsilon|\nabla df|^2}{(\Lambda+\epsilon)^2}\ge0\nonumber,
\end{align}
putting which into \eqref{rigid_ineq} gives, on $X\setminus V$, 
\begin{align}\label{rigid_ineq_1}
\Delta_{\omega}\log (\Lambda+\epsilon)&\ge\lambda_\omega\cdot\frac{\Lambda}{\Lambda+\epsilon}-f^*\kappa_\eta\cdot\Lambda\cdot\frac{\Lambda}{\Lambda+\epsilon}
\end{align}

Note that both sides of \eqref{rigid_ineq_1} are continuous functions on $X$ and $V$ is a proper subvariety, therefore, by continuity \eqref{rigid_ineq_1} in fact holds on the whole $X$.

Next, we fix a $\phi\in C^\infty(X,\mathbb R)$ such that $e^\phi\omega$ is a Gauduchon metric on $X$ (see subsection \ref{subsect_g}).

Integrating \eqref{rigid_ineq_1} with respect to $e^{(n-1)\phi}\omega^n$ over $X$ gives
\begin{align}\label{rigid_ineq_2}
\int_X\frac{\Lambda}{\Lambda+\epsilon}\left(\lambda_{\omega}-f^*\kappa_\eta \Lambda\right)e^{(n-1)\phi}\omega^n\le\int_X\left(\Delta_{\omega}\log (\Lambda+\epsilon)\right)e^{(n-1)\phi}\omega^n.
\end{align}
Since $e^\phi\omega$ is Gauduchon and $\log (\Lambda+\epsilon)$ is a smooth real function on $X$, we have
$$\int_X\left(\Delta_{\omega}\log (\Lambda+\epsilon)\right)e^{(n-1)\phi}\omega^n=\int_X\left(\Delta_{e^\phi\omega}\log (\Lambda+\epsilon)\right)(e^{\phi}\omega)^n=0,
$$
where $\Delta_{e^\phi\omega}$ is the complex Laplacian with respect to $e^\phi\omega$. Then we plug it into \eqref{rigid_ineq_2} to see that
\begin{align}
\int_X\frac{\Lambda}{\Lambda+\epsilon}\left(\lambda_{\omega}-f^*\kappa_\eta\cdot \Lambda\right)e^{(n-1)\phi}\omega^n\le0\nonumber.
\end{align}
Moreover, we easily have a positive constant $L$ such that for any $\epsilon\in(0,1]$,
$$\left|\frac{\Lambda}{\Lambda+\epsilon}\left(\lambda_{\omega}-f^*\kappa_\eta\cdot \Lambda\right)\right|\le L$$
on $X$, and as $\epsilon\to0^+$,
$$\frac{\Lambda}{\Lambda+\epsilon}\left(\lambda_{\omega}-f^*\kappa_\eta\cdot \Lambda\right)\to\lambda_{\omega}-f^*\kappa_\eta\cdot \Lambda$$
pointwise on $X\setminus V$ and so pointwise almost everywhere on $X$ (note that $V$ is of zero measure with respect to $e^{(n-1)\phi}\omega^n$). Therefore, we can apply Lebegue's Dominated Convergence Theorem to conclude that
\begin{align}
\int_X\left(\lambda_{\omega}-f^*\kappa_\eta\cdot \Lambda\right)e^{(n-1)\phi}\omega^n&=\lim_{\epsilon\to0}\int_X\frac{\Lambda}{\Lambda+\epsilon}\left(\lambda_{\omega}-f^*\kappa_\eta\cdot \Lambda\right)e^{(n-1)\phi}\omega^n\nonumber\\
&\le0\nonumber,
\end{align}
from which Theorem \ref{main_const} follows.
\end{proof}

Next we prove Theorem \ref{main_de}. 

\begin{proof}[Proof of Theorem \ref{main_de}]
The proof uses ideas similar to Theorem \ref{main_const}. Set $u:=\frac{f^*\eta^n}{\omega^n}=\frac{(\det(\eta_{\alpha\bar\beta}\circ f))\cdot|\det J(f)|^2}{\det(g_{i\bar j})}$ (where we have fixed local holomorphic charts $(z^1,...,z^n)$ on $X$ and $(w^1,...,w^n)$ on $Y$ and write $\omega=\sqrt{-1}g_{i\bar j}dz^i\wedge d\bar z^j$ and $\eta=\sqrt{-1}\eta_{\alpha\bar \beta}dw^\alpha\wedge d\bar w^\beta$). Let $\epsilon$ be an arbitrary positive constant. At any point $x$ with $|\det J(f)|(x)>0$, i.e. $u(x)>0$, by computations in Chern \cite{Chern} and Lu \cite{L} we have
\begin{align}\label{de_ineq}
\Delta_{\omega}\log(u+\epsilon)&=\frac{\Delta_{\omega} u}{u+\epsilon}-\frac{|\partial u|^2}{(u+\epsilon)^2}\nonumber\\
&=\frac{u}{u+\epsilon}\left(\frac{\Delta_{\omega} u}{u}-\frac{|\partial u|^2}{u^2}\right)+\frac{\epsilon|\partial u|^2}{u(u+\epsilon)^2}\nonumber\\
&=\frac{u}{u+\epsilon}(\Delta_{\omega}\log u)+\frac{\epsilon|\partial u|^2}{u(u+\epsilon)^2}\nonumber\\
&=\frac{u}{u+\epsilon}tr_{\omega}\left(-f^*Ric(\eta)+Ric(\omega)\right)+\frac{\epsilon|\partial u|^2}{u(u+\epsilon)^2}\nonumber\\
&=\frac{u}{u+\epsilon}\left(-tr_{\omega}f^*Ric(\eta)+R_\omega\right)+\frac{\epsilon|\partial u|^2}{u(u+\epsilon)^2}\nonumber\\
&\ge \frac{u}{u+\epsilon}\left(-tr_{\omega}f^*Ric(\eta)+R_{\omega}\right),
\end{align}
where in the fourth equality we have used $\partial\bar\partial\log|\det J(f)|^2=0$ at $x$ whenever $\det J(f)(x)\neq0$. 
\par Set $W:=\{x\in X|\det J(f)=0$ at $x\}$, which, as $f$ is non-degenerate, is a proper subvariety (may be empty) of $X$. Then, the above discussions mean that \eqref{de_ineq} holds on $X\setminus W$ and by continuity we know it holds on the whole $X$. 
\par As before, we fix a $\psi\in C^\infty(X,\mathbb R)$ such that $e^\psi\omega$ is a Gauduchon metric on $X$. Therefore,
\begin{align}
\int_X\frac{u}{u+\epsilon}\left(-tr_{\omega}f^*Ric(\eta)+R_{\omega}\right)e^{(n-1)\psi}\omega^n&\le\int_{X}(\Delta_{\omega}\log(u+\epsilon))e^{(n-1)\psi}\omega^n\nonumber.
\end{align}
Now, we can use the same arguments as in Theorem \ref{main_const} to complete the proof.
\par Theorem \ref{main_de} is proved.
\end{proof}

\section{Applications}\label{app}
This section contains several applications of our integral inequalities.

\subsection{Rigidity theorems for holomorphic maps}
 
We will apply Theorem \ref{main_const} to obtain several rigidity theorems. For example, if we assume $(X,\omega)$ is a compact K\"ahler manifold of positive Ricci curvature (implying that $\int_X\lambda_{\omega}\omega^n>0$) and $(Y,\eta)$ is a K\"ahler manifold of nonpositive holomorphic sectional curvature (implying that $\kappa_\eta\le0$ on $Y$), then we easily recover the aforementioned fundamental rigidity theorems of Yau \cite{Y78} and Royden \cite{R} from Theorem \ref{main_const}. 
\par We may particularly mention that, in general, the function $\lambda_{\omega}$ in Theorem \ref{main_const} may has different signs at different points and so the second Ricci curvature may not be nonnegatively signed. Therefore, Theorem \ref{main_const} seems very flexible in applications, and implies new rigidity theorems even in the case that the curvatures of both the domain and target manifolds are not assume to be signed in pointwise sense. Theorem \ref{main_const} and the following applications indicate that the total first eigenvalue of second Ricci curvature should be essential in deriving rigidity theorems for holomorphic maps.

\begin{cor}\label{cor_1.1}
A holomorphic map from a compact Hermitian manifold of quasi-positive second Ricci curvature to a compact K\"ahler manifold of almost nonpositive holomorphic sectional curvature must be constant.
\end{cor}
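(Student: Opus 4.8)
The plan is to deduce Corollary \ref{cor_1.1} directly from Theorem \ref{main_const} by showing that the curvature hypotheses force the integral inequality \eqref{main.ineq} to fail for any non-constant holomorphic map. Suppose $f:(X,\omega)\to(Y,\eta)$ is a non-constant holomorphic map, where $\omega$ is a Hermitian metric on the compact manifold $X$ with quasi-positive second Ricci curvature (meaning $Ric^{(2)}(\omega)\ge0$ everywhere and $Ric^{(2)}(\omega)>0$ at some point), and $(Y,\eta)$ is compact K\"ahler. The first observation is that quasi-positivity of $Ric^{(2)}(\omega)$ gives $\lambda_\omega\ge0$ on $X$ with $\lambda_\omega>0$ on a nonempty open set, hence for the function $\phi$ provided by Theorem \ref{main_const} (or any smooth $\phi$) we have $\int_X\lambda_\omega e^{(n-1)\phi}\omega^n>0$, since $e^{(n-1)\phi}\omega^n$ is a smooth positive volume form. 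So the left-hand side of \eqref{main.ineq} is strictly positive.

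The second step is to handle the right-hand side using the almost nonpositivity of the holomorphic sectional curvature of $Y$. Here I would exploit the freedom of choosing the target metric: the class $\alpha_\epsilon$ supplied by Definition \ref{defn1} contains a K\"ahler metric $\eta_\epsilon$ with $\sup_X H^{\eta_\epsilon}$ small relative to a reference class, and more precisely $\mu_{\alpha_\epsilon}\alpha_\epsilon<\epsilon[\hat\omega]$. The delicate point is that $\kappa_\eta$ as defined in Notation \ref{hc}(2) equals $\frac{n+1}{2n}H^\eta$ where $H^\eta\le0$ and equals $H^\eta$ where $H^\eta>0$; in either case $\kappa_\eta\le\max\{H^\eta,0\}\le (H^\eta)_+$, and one needs the $(1,1)$-class statement $\mu_{\alpha_\epsilon}\alpha_\epsilon<\epsilon[\hat\omega]$ to control $\int_X f^*\kappa_{\eta_\epsilon}\,e^{(n-1)\phi}f^*\eta_\epsilon\wedge\omega^{n-1}$ cohomologically. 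Concretely, since $\sup_Y H^{\eta_\epsilon}$ can be taken nonpositive once $\mu_{\alpha_\epsilon}<0$ (which is achievable, as $\mu_{\alpha_\epsilon}\alpha_\epsilon<\epsilon[\hat\omega]$ with $\alpha_\epsilon$ K\"ahler forces $\mu_{\alpha_\epsilon}$ to be small, and in the genuinely negative regime $\mu_{\alpha_\epsilon}<0$), we get $\kappa_{\eta_\epsilon}\le0$ on $Y$, making the right-hand side of \eqref{main.ineq} nonpositive. This contradicts the strict positivity of the left-hand side, so $f$ must be constant.

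I expect the main obstacle to be the target-metric argument: one must verify that the almost nonpositivity notion genuinely yields a K\"ahler metric on $Y$ whose holomorphic sectional curvature supremum is nonpositive (or at least small enough to be absorbed), and that choosing a \emph{different} metric $\eta_\epsilon$ on $Y$ is legitimate here — which it is, since the corollary's hypothesis is a property of the complex manifold $Y$ (admitting such classes), not of a fixed metric, and Theorem \ref{main_const} applies to $(Y,\eta_\epsilon)$ for each $\epsilon$. If one only knows $\mu_{\alpha_\epsilon}$ is small but positive rather than negative, the argument instead requires comparing $\int_X f^*\kappa_{\eta_\epsilon}f^*\eta_\epsilon\wedge\omega^{n-1}$ with $\mu_{\alpha_\epsilon}\int_X f^*\eta_\epsilon\wedge\omega^{n-1}$ and then $f^*(\mu_{\alpha_\epsilon}\alpha_\epsilon)\cup[\omega]^{n-1}<\epsilon f^*[\hat\omega]\cup[\omega]^{n-1}$, letting $\epsilon\to0$; since $f$ non-constant keeps $f^*\alpha_\epsilon\ne0$ controlled while $\int_X\lambda_\omega e^{(n-1)\phi}\omega^n$ is a fixed positive number independent of $\epsilon$, a limiting argument closes the gap. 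The bookkeeping between the pointwise function $\kappa_\eta$, the weight $e^{(n-1)\phi}$, and the cohomological bound is where care is needed, but no new analytic input beyond Theorem \ref{main_const} is required.
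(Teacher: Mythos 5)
Your proposal contains two branches, and it is the second, ``fallback'' branch that is actually the proof; it coincides with the paper's argument. The paper assumes a non-constant $f$, fixes the Gauduchon factor $\phi$ (so the left-hand side of \eqref{main.ineq} is a fixed $\delta_0>0$ by quasi-positivity), takes K\"ahler metrics $\eta_i$ with $(\sup_Y H^{\eta_i})[\eta_i]<i^{-1}[\eta_0]$, assumes WLOG $\sup_Y H^{\eta_i}>0$ (otherwise $\kappa_{\eta_i}\le0$ and \eqref{main.ineq} is already violated), bounds $\kappa_{\eta_i}\le\sup_Y H^{\eta_i}$ pointwise, and then uses that $e^{(n-1)\phi}\omega^{n-1}$ is $\partial\bar\partial$-closed so the integral $\int_X e^{(n-1)\phi}f^*\eta_i\wedge\omega^{n-1}$ depends only on the class $[\eta_i]$; the class inequality then forces the right-hand side below $\delta_0$ for large $i$, a contradiction. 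Your sketch ``compare with $\mu_{\alpha_\epsilon}\int_X f^*\eta_\epsilon\wedge\dots$, use $\mu_{\alpha_\epsilon}\alpha_\epsilon<\epsilon[\hat\omega]$, let $\epsilon\to0$ against the fixed positive left-hand side'' is exactly this, modulo the bookkeeping you yourself flag: the pairing must be taken with $e^{(n-1)\phi}\omega^{n-1}$ rather than with ``$[\omega]^{n-1}$'', since $\omega$ is only Hermitian and not closed, and one needs a small adjustment to pass from the infimum $\mu_{\alpha_\epsilon}$ to an actual metric $\eta_\epsilon\in\alpha_\epsilon$ with $(\sup_Y H^{\eta_\epsilon})[\eta_\epsilon]$ still dominated by a fixed class times $\epsilon$.

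The genuine problem is with your primary branch. The claim that almost nonpositivity lets you arrange $\mu_{\alpha_\epsilon}<0$, hence a K\"ahler metric with $\sup_Y H\le0$ and so $\kappa_{\eta_\epsilon}\le0$, is not justified by Definition \ref{defn1} and is false in the intended generality: the condition $\mu_{\alpha_\epsilon}\alpha_\epsilon<\epsilon[\hat\omega]$ is invariant under rescaling $\alpha_\epsilon\mapsto t\alpha_\epsilon$ (since $\mu_{t\alpha}=t^{-1}\mu_\alpha$), so it does not force $\mu_{\alpha_\epsilon}$ itself to be small, let alone nonpositive; the whole point of the notion, as the paper stresses, is that it is a statement at the level of $(1,1)$-classes and is meaningful precisely when $\mu_\alpha>0$ for every K\"ahler class. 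If that branch were the only argument, the proof would be incomplete, since in the genuinely ``almost'' regime no metric on $Y$ with nonpositive $\kappa$ need exist. The correct structure is the dichotomy the paper uses: either some $\eta_i$ has $\sup_Y H^{\eta_i}\le0$ (and you conclude at once), or one may assume $\sup_Y H^{\eta_i}>0$ and run the class-level limiting argument of your second branch. With that correction, your write-up matches the paper's proof.
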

Here, $\omega$ has quasi-positive second Ricci curvature if and only if $\lambda_\omega\ge0$ on $X$ and $\lambda_\omega>0$ at some point in $X$. Corollary \ref{cor_1.1} applies when then domain space is, e.g. a projective manifold of nef and big anti-canonical line bundle. 

\begin{proof}
Assume a contradiction that there is a non-constant holomorphic map $f:X\to Y$ between two compact complex manifolds, where $X$ admits a Hermitian metric $\omega$ of quasi-positive second Ricci curvature and $X$ is a compact K\"ahler manifold of almost nonpositive holomorphic sectional curvature . Fix $\phi\in C^\infty(X,\mathbb R)$ such that $e^\phi\omega$ is Gauduchon. Since $Ric^{(2)}(\omega)$ is quasi-positive, we know
\begin{equation}
\int_X\lambda_\omega e^{(n-1)\phi}\omega^n=:\delta_0>0\nonumber.
\end{equation}
Since $Y$ is a compact K\"ahler manifold of almost nonpositive holomorphic sectional curvature, we fix a sequence of K\"ahler metrics $\eta_i$, $i=0,1,2,...,$ on $Y$ such that $(\sup_XH^{\eta_i})\cdot[\eta_i]< i^{-1}[\eta_0]$. We may assume $\sup_XH^{\eta_i}>0$ for every $i$. Then $\kappa_{\eta_i}\le\sup_XH^{\eta_i}$ on $Y$, and so
\begin{align}
n\int_Xf^*\kappa_{\eta_i} \cdot e^{(n-1)\phi}f^*\eta_i\wedge\omega^{n-1}\le n\int_X (\sup_XH^{\eta_i})\cdot e^{(n-1)\phi}f^*\eta_i\wedge\omega^{n-1}\nonumber.
\end{align}
Recall that $e^\phi\omega$ being Gauduchon implies that the integration on the above right hand side depends only on the class of $\eta_i$. Therefore,
\begin{align}
n\int_Xf^*\kappa_{\eta_i} \cdot e^{(n-1)\phi}f^*\eta_i\wedge\omega^{n-1}&\le n\int_X (\sup_XH^{\eta_i})\cdot e^{(n-1)\phi}f^*[\eta_i]\wedge\omega^{n-1}\nonumber\\
&\le \frac ni\int_X e^{(n-1)\phi}f^*[\eta_0]\wedge\omega^{n-1}\nonumber\\
&\le\frac{\delta_0}{2}\nonumber\\
&<\int_X\lambda_\omega e^{(n-1)\phi}\omega^n\nonumber
\end{align}
for sufficiently large $i$, which contradicts Theorem \ref{main_const}.
\par Corollary \ref{cor_1.1} is proved.
\end{proof}

The same arguments prove the followings.

\begin{cor}\label{cor_1.2}
A holomorphic map from a compact Gauduchon manifold with positive total first eigenvalue of second Ricci curvature to a compact K\"ahler manifold of almost nonpositive holomorphic sectional curvature must be constant.
\end{cor}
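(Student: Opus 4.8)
The plan is to reduce Corollary \ref{cor_1.2} to the already-established Theorem \ref{main_const} by essentially the same contradiction scheme used for Corollary \ref{cor_1.1}, with the only difference being the source of the positivity of $\int_X\lambda_\omega\,\omega^n$. So suppose, for contradiction, that $f:X\to Y$ is a non-constant holomorphic map, where $(X,\omega)$ is a compact Gauduchon manifold with $\int_X\lambda_\omega\,\omega^n>0$ and $Y$ is a compact K\"ahler manifold of almost nonpositive holomorphic sectional curvature. Since $\omega$ is already Gauduchon, the function $\phi$ in Theorem \ref{main_const} may be taken to be $0$ (constant), so the inequality \eqref{main.ineq} reads $\int_X\lambda_\omega\,\omega^n\le n\int_X f^*\kappa_\eta\cdot f^*\eta\wedge\omega^{n-1}$ for every K\"ahler metric $\eta$ on $Y$. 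Set $\delta_0:=\int_X\lambda_\omega\,\omega^n>0$.

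Next I would exploit the almost nonpositivity hypothesis exactly as before: fix a reference K\"ahler class $[\eta_0]$ on $Y$ and choose a sequence of K\"ahler metrics $\eta_i$ with $(\sup_X H^{\eta_i})\cdot[\eta_i]<i^{-1}[\eta_0]$, and without loss of generality $\sup_X H^{\eta_i}>0$. Then $\kappa_{\eta_i}\le\sup_X H^{\eta_i}$ pointwise on $Y$, so $n\int_X f^*\kappa_{\eta_i}\cdot f^*\eta_i\wedge\omega^{n-1}\le n(\sup_X H^{\eta_i})\int_X f^*\eta_i\wedge\omega^{n-1}$. Crucially, since $\omega$ is Gauduchon the integral $\int_X f^*\eta_i\wedge\omega^{n-1}$ depends only on the cohomology class $f^*[\eta_i]$, and multiplying through by the positive scalar $\sup_X H^{\eta_i}$ we get $n(\sup_X H^{\eta_i})\int_X f^*\eta_i\wedge\omega^{n-1}=n\int_X f^*\big((\sup_X H^{\eta_i})[\eta_i]\big)\wedge[\omega]^{n-1}\le\frac ni\int_X f^*[\eta_0]\wedge[\omega]^{n-1}$, where the inequality uses $(\sup_X H^{\eta_i})[\eta_i]<i^{-1}[\eta_0]$ together with the fact that $[\omega]^{n-1}$ pairs nonnegatively against effective/positive classes (here $\omega$ Gauduchon makes $[\omega]^{n-1}$ a well-defined pairing on $(1,1)$-classes). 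For $i$ large enough the right side is below $\delta_0/2<\delta_0=\int_X\lambda_\omega\,\omega^n$, contradicting \eqref{main.ineq}. Hence $f$ must be constant.

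I do not anticipate a genuine obstacle here — the corollary is a direct transcription of the proof of Corollary \ref{cor_1.1}, the point being merely that ``$Ric^{(2)}(\omega)$ quasi-positive'' is replaced by the weaker and more intrinsic hypothesis ``$\int_X\lambda_\omega\,\omega^n>0$'', which is exactly the quantity appearing on the left of \eqref{main.ineq} when $\phi$ is constant. The one place requiring a small amount of care is the cohomological manipulation in the previous paragraph: one must check that, for a Gauduchon $\omega$, the assignment $[\beta]\mapsto\int_X\beta\wedge\omega^{n-1}$ is well-defined on Bott–Chern (or Aeppli) $(1,1)$-classes and is monotone with respect to the positivity of $(1,1)$-classes used in Definition \ref{defn1}; this is precisely the property recorded in Subsection \ref{subsect_g} and already invoked in the proof of Corollary \ref{cor_1.1}, so it may be quoted verbatim. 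Thus the whole argument is a one-line appeal: ``the same arguments as in the proof of Corollary \ref{cor_1.1} apply, using $\int_X\lambda_\omega\,\omega^n>0$ in place of quasi-positivity of the second Ricci curvature.''
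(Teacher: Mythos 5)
Your proposal is correct and follows exactly the paper's route: the paper proves Corollary \ref{cor_1.1} by this contradiction argument and then notes that ``the same arguments'' give Corollary \ref{cor_1.2}, with the quasi-positivity hypothesis replaced by $\int_X\lambda_\omega\,\omega^n>0$ and $\phi$ taken constant since $\omega$ is already Gauduchon. Your cohomological justification (class-independence of $\int_X f^*\eta_i\wedge\omega^{n-1}$ via $\partial\bar\partial(\omega^{n-1})=0$ and nonnegativity of the pairing against K\"ahler classes) is precisely what the paper invokes, so nothing is missing.
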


\begin{cor}\label{cor_1.3}
A holomorphic map from a compact Gauduchon manifold with zero total first eigenvalue of second Ricci curvature to a K\"ahler (resp. Hermitian) manifold of negative holomorphic sectional (resp. bisectional) curvature must be constant.
\end{cor}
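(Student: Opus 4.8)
The plan is to rerun the contradiction argument from the proof of Corollary \ref{cor_1.1}, now using that the total first eigenvalue \emph{vanishes} instead of being positive, and that the target curvature is \emph{strictly} negative instead of merely almost nonpositive. Suppose, for contradiction, that there is a non-constant holomorphic map $f:X\to Y$, where $(X,\omega)$ is compact Gauduchon with $\int_X\lambda_\omega\omega^n=0$ and $(Y,\eta)$ is K\"ahler (resp. Hermitian) with negative holomorphic sectional (resp. bisectional) curvature. Since $\omega$ is already Gauduchon, the last sentence of Theorem \ref{main_const} lets us take $\phi\equiv0$, so that $e^{(n-1)\phi}\equiv1$ and
\begin{equation}
0=\int_X\lambda_\omega\,\omega^n\le n\int_Xf^*\kappa_\eta\cdot f^*\eta\wedge\omega^{n-1}.\nonumber
\end{equation}

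The next step is to pin down the sign of the right-hand side. By Notation \ref{hc}, the negativity hypothesis on $\eta$ forces $\kappa_\eta<0$ at every point of $Y$: in the K\"ahler case, $H^\eta_y<0$ gives $\kappa_\eta(y)=\rho(H^\eta_y)\,H^\eta_y<0$ since $\rho$ is everywhere positive, and in the Hermitian case $\kappa_\eta(y)=BK^\eta_y<0$ by definition. The target $Y$ is not assumed compact, but this is harmless: $f(X)$ is a compact subset of $Y$ (as $X$ is compact and $f$ continuous), so the continuous function $\kappa_\eta$ attains a strictly negative maximum on it; hence $c:=-\sup_{f(X)}\kappa_\eta>0$ and $f^*\kappa_\eta\le -c$ on $X$. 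Combining this with the pointwise identity $f^*\eta\wedge\omega^{n-1}=\tfrac1n(tr_\omega f^*\eta)\,\omega^n=\tfrac1n|\partial f|^2\,\omega^n\ge0$, we get
\begin{equation}
0\le n\int_Xf^*\kappa_\eta\cdot f^*\eta\wedge\omega^{n-1}\le -cn\int_Xf^*\eta\wedge\omega^{n-1}=-c\int_X|\partial f|^2\,\omega^n\le0.\nonumber
\end{equation}

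Therefore $\int_X|\partial f|^2\,\omega^n=0$, hence $\partial f\equiv0$ on $X$, i.e. $f$ is constant, contradicting the choice of $f$. I do not expect any real obstacle beyond two bookkeeping points: reading the sign convention of Notation \ref{hc} correctly, so that strict negativity of holomorphic sectional/bisectional curvature genuinely yields $\kappa_\eta<0$, and noting that non-compactness of $Y$ causes no trouble since one only needs control of $\kappa_\eta$ along $f(X)$. The substance is entirely carried by Theorem \ref{main_const} together with the Gauduchon hypothesis, which removes the weight $e^{(n-1)\phi}$.
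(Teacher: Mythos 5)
Your argument is correct and is exactly the adaptation the paper intends: it states that Corollary \ref{cor_1.3} follows by "the same arguments" as Corollary \ref{cor_1.1}, i.e. applying Theorem \ref{main_const} with $\phi\equiv0$ (Gauduchon hypothesis) and contradicting the sign of the right-hand side, which is what you do. Your handling of the possibly non-compact target via compactness of $f(X)$ and the identity $f^*\eta\wedge\omega^{n-1}=\tfrac1n|\partial f|^2\omega^n$ is a correct and standard way to close the argument.
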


\begin{rem}
(1) In Corollary \ref{cor_1.2} we do not assume any pointwise curvature signs for both the domain and target spaces.\\
(2) In Corollaries \ref{cor_1.1} and \ref{cor_1.2}, the same conclusions hold if the target manifold is assumed to be a (not necessarily compact) Hermition manifold of nonpositive holomorphic bisectional curvature (or real bisectional curvature, a new curvature notion recently introduced in \cite{YZ}).\\
(3) It seems that Corollaries \ref{cor_1.1} and \ref{cor_1.2} are new even if we assume the target space is a K\"ahler (resp. Hermitian) manifold of nonpositive holomorphic sectional (resp. bisectional) curvature.\\
(4) Corollary \ref{cor_1.2} is somehow optimal if we further assume the target compact K\"ahler manifold is of nonpositive holomorphic sectional curvature. More precisely, for any compact K\"ahler manifold $Y$ of nonpositive holomorphic sectional curvature, there exist a compact K\"ahler manifold $X$ of zero total first eigenvalue of Ricci curvature and a non-constant holomorphic map $f:X\to Y$. In fact, since by \cite{Y78,TY} (also see \cite{Zys18} for a generalization to almost setting), a compact K\"ahler manifold of (almost) nonpositive holomorphic sectional curvature has nef canonical line bundle, then by using the nef reduction we have a fibration $\pi: Y\dashrightarrow Z$ with regular fiber $Y_z$, up to a finite unramified covering, a flat complex torus (see \cite{HLW16} for details). Then we may choose $X=Y_z$ and $f:X\to Y$ the holomorphic embedding.\\
(5) A very special case of Corollary \ref{cor_1.2} is that \emph{a K\"ahler manifold of almost nonpositive holomorphic sectional curvature contains no rational curves}, which has been observed in our previous work \cite[Theorem 1.10]{Zys18} by using a result of Tosatti-Y.G. Zhang \cite{ToZyg} (also see subsection \ref{subsect_cp1}). Our argument for Corollary \ref{cor_1.2} here provides an alternative proof for it.
\end{rem}

\subsection{Degeneracy theorems for holomorphic maps}
As another natural generalization of Schwarz-Pick-Ahlfors Lemma to higher dimensions, one may also compare the volume forms related by a holomorphic map. The most classical works include Chern \cite{Chern}, Lu \cite{L} and Yau \cite{Y78} etc.. In particular, Yau proved that a non-degenerate holomorphic map from a compact K\"ahler manifold of scalar curvature bounded from below by a negative number to a Hermitian manifold of Chern Ricci curvature bounded from above by a negative constant  decreases volume forms (up to multiplying a constant factor depending only on the bounds of curvatures), assuming the domain and target manifolds are of the same dimension. Consequently, a holomorphic map from an $n$-dimensional compact K\"ahler manifold of nonnegative scalar curvature to an $n$-dimensional Hermitian manifold of Chern Ricci curvature bounded from above by a negative constant must be totally degenerate. More recent developments can be found in \cite{To,Ni} etc.. 
\par Here we would like to prove some degeneracy theorems using the integral inequality in Theorem \ref{main_de}. Given Theorem \ref{main_de}, to get degeneracy theorems, we just need to assume curvature conditions that will destroy \eqref{ineq_de}. Again, noting that it is an integral inequality, the curvatures may be assumed to be signed only in certain integral or almost sense, not necessarily in pointwise sense.
\begin{cor}\label{cor_2.2}
A holomorphic map from an $n$-dimensional compact Gauduchon manifold of zero total Chern scalar curvature to an $n$-dimensional compact K\"ahler manifold of semiample canonical line bundle and positive Kodaira dimension must be totally degenerate.
\end{cor}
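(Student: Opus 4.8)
The plan is to argue by contradiction: assume $f\colon X\to Y$ is non-degenerate, and derive a strict violation of \eqref{ineq_de}. Fix a Gauduchon metric $\omega$ on $X$ with $\int_XR_\omega\,\omega^n=0$ (the standing hypothesis on $X$), and fix any K\"ahler metric $\eta$ on $Y$. Since $\omega$ is Gauduchon, Theorem \ref{main_de} applies with $\psi$ a constant (which then cancels from both sides), so
\[
0=\int_XR_\omega\,\omega^n\ \le\ n\int_Xf^*(Ric(\eta))\wedge\omega^{n-1}.
\]
It therefore suffices to show that the right-hand side is \emph{strictly negative}.

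The key geometric input is semiampleness of $K_Y$ together with $\mathrm{kod}(Y)=:\kappa\ge1$. Choose $m\ge1$ so that $K_Y^{\otimes m}$ is globally generated, and $m$ large enough that the associated morphism $\Phi\colon Y\to\mathbb{P}^N$ satisfies $\Phi^*\mathcal O_{\mathbb{P}^N}(1)=K_Y^{\otimes m}$ and $\dim\Phi(Y)=\kappa$; then set $\chi:=\tfrac1m\,\Phi^*\omega_{FS}$ for a suitably normalized Fubini--Study form $\omega_{FS}$. Thus $\chi$ is a smooth, closed, semi-positive real $(1,1)$-form on $Y$ whose class is $-[Ric(\eta)]$ (recall $[Ric(\eta)]$ is a fixed positive multiple of $-c_1(K_Y)$, independent of $\eta$). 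Since $Y$ is compact K\"ahler, the $\partial\bar\partial$-lemma gives $v\in C^\infty(Y,\mathbb R)$ with $Ric(\eta)=-\chi+\sqrt{-1}\,\partial\bar\partial v$, hence $f^*(Ric(\eta))=-f^*\chi+\sqrt{-1}\,\partial\bar\partial(v\circ f)$ on $X$. Because $\omega$ is Gauduchon, $\int_X\sqrt{-1}\,\partial\bar\partial(v\circ f)\wedge\omega^{n-1}=0$, and therefore
\[
n\int_Xf^*(Ric(\eta))\wedge\omega^{n-1}\ =\ -\,n\int_Xf^*\chi\wedge\omega^{n-1}.
\]

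It remains to check the strict inequality $\int_Xf^*\chi\wedge\omega^{n-1}>0$. Now $f^*\chi=\tfrac1m(\Phi\circ f)^*\omega_{FS}$ is a semi-positive $(1,1)$-form, and $f^*\chi\wedge\omega^{n-1}=\tfrac1n(\mathrm{tr}_\omega f^*\chi)\,\omega^n$ with $\mathrm{tr}_\omega f^*\chi\ge0$; so the integral is $\ge0$, and it is $>0$ provided $\Phi\circ f\colon X\to\mathbb{P}^N$ is non-constant. But if $\Phi\circ f$ were constant, then $f(X)$ would be contained in a single fibre $\Phi^{-1}(p)$, which — as $\Phi$ is non-constant — is a proper closed analytic subvariety of $Y$ and hence contains no non-empty open subset of $Y$; this is impossible, since $f$ non-degenerate implies $f$ is a local biholomorphism near some point of $X$, so $f(X)$ does contain a non-empty open subset of $Y$. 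Thus $n\int_Xf^*(Ric(\eta))\wedge\omega^{n-1}<0=\int_XR_\omega\,\omega^n$, contradicting Theorem \ref{main_de}; hence $f$ must be totally degenerate.

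I expect the only genuinely delicate point to be the cohomological reduction in the second paragraph: replacing $Ric(\eta)$ by the \emph{semi-positive} representative $-\chi$ inside the pairing $\int_X(\,\cdot\,)\wedge\omega^{n-1}$. This uses both the $\partial\bar\partial$-lemma on the compact K\"ahler target (to absorb the difference of representatives into a global $\sqrt{-1}\,\partial\bar\partial$-exact term) and the Gauduchon property of $\omega$ (to make that exact term integrate to zero against $\omega^{n-1}$). Positivity of the Kodaira dimension enters only at the end, and only to guarantee that the fibres of $\Phi$ are proper subvarieties — exactly what forbids a non-degenerate holomorphic map into $Y$ from factoring through a fibre of $\Phi$.
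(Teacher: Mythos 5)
Your proof is correct, and it reaches the contradiction with \eqref{ineq_de} by a slightly different technical route than the paper. The paper also pulls the semiample (Iitaka) fibration $\pi:Y\to\pi(Y)\subset\mathbb{CP}^N$ and sets $\chi=\pi^*\tilde\omega\in 2\pi c_1(K_Y)$, but then invokes Yau's theorem to choose a \emph{specific} K\"ahler metric $\eta$ with $Ric(\eta)=-\chi$ pointwise, so that $n\int_Xf^*(Ric(\eta))\wedge\omega^{n-1}=-n\int_Xf^*\chi\wedge\omega^{n-1}$ holds with no cohomological manipulation; strict negativity is then obtained by shrinking the neighborhood $U$ on which $f$ is a biholomorphism so that $\chi$ has positive directions at every point of $f(U)$, whence $\mathrm{tr}_\omega f^*\chi>0$ on $U$. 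You instead keep $\eta$ arbitrary and replace $Ric(\eta)$ by the semipositive representative $-\chi$ of its class, absorbing the difference into $\sqrt{-1}\partial\bar\partial(v\circ f)$ (via the $\partial\bar\partial$-lemma on the compact K\"ahler target) and killing it against $\omega^{n-1}$ by the Gauduchon condition; and you get strictness by showing $\Phi\circ f$ cannot be constant, since a non-degenerate map cannot factor through a fibre of $\Phi$. What your route buys: it avoids solving a complex Monge--Amp\`ere equation (Yau's theorem), using only the cohomological invariance of $\int_X(\cdot)\wedge\omega^{n-1}$ for Gauduchon $\omega$ --- the same invariance the paper itself uses in Corollaries \ref{cor_1.1} and \ref{cor_2.3} --- and your strictness argument is arguably more robust (no discussion of generic points or shrinking $U$). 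What the paper's route buys: the pointwise identity $Ric(\eta)=-\chi$ makes the reduction immediate, with no appeal to the $\partial\bar\partial$-lemma. Two minor remarks on your write-up: the identity $Ric(\eta)+\chi=\sqrt{-1}\partial\bar\partial v$ can be had even more cheaply, since both $-Ric(\eta)$ and $\chi$ are curvature forms of smooth Hermitian metrics on $K_Y$, so their difference is automatically $\sqrt{-1}\partial\bar\partial$ of the log-ratio of the metrics (no K\"ahler hypothesis needed for this step); and your passage from $\int_X\mathrm{tr}_\omega f^*\chi\,\omega^n=0$ to $\Phi\circ f$ constant implicitly uses connectedness of $X$ and positivity of $\omega_{FS}$ (vanishing of $(\Phi\circ f)^*\omega_{FS}$ forces $d(\Phi\circ f)\equiv0$), which is worth one explicit sentence.
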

\begin{proof}
Assume a contradiction that there is a non-degenerate holomorphic map $f:X\to Y$ between two $n$-dimensional compact complex manifolds, where $X$ admits a Gauduchon metric $\omega$ of zero total Chern scalar curvature, i.e.
\begin{equation}
\int_XR_\omega \omega^n=0\nonumber,
\end{equation} 
and $Y$ is a compact K\"ahler manifold of semiample canonical line bundle and positive Kodaira dimension. Since $f$ is non-degenerate at some point $x_0\in X$, $f|_U:U\to f(U)$ is biholomorphic for some open neighborhood $U$ of $x_0$. Let $\pi:Y\to \pi(Y)\subset \mathbb{CP}^N$ be the semiample fibration induced by the pluricanonical linear system of $Y$, where the Kodaira dimension of $Y$ equals to $dim_{\mathbb C}(\pi(Y))\ge1$. Then we fix $\chi=\pi^*\tilde\omega\in2\pi c_1(K_X)$, where $\tilde\omega$ is a multiple of Fubini-Study metric on $\mathbb{CP}^N$, and by Yau's fundamental theorem \cite{Y} we fix a K\"ahler metric $\eta$ on $Y$ such that $Ric(\eta)=-\chi$. Since $\chi$ is semipositive on $Y$ and, at a generic point, $\chi$ is positive in some directions, up to shrinking $U$ (and so $f(U)$) we may assume $\chi$ has positive directions for every point in $f(U)$. Therefore, 
\begin{align}
n\int_Xf^*(Ric(\eta))\wedge\omega^{n-1}&=-n\int_Xf^*\chi\wedge\omega^{n-1}\nonumber\\
&\le-n\int_Uf^*\chi\wedge\omega^{n-1}\nonumber\\
&<0=\int_XR_\omega \omega^n\nonumber,
\end{align}
contradicting \eqref{ineq_de}.
\par Corollary \ref{cor_2.2} is proved.
\end{proof}

Similar arguments can be used to prove
\begin{cor}\label{cor_2.3}
A holomorphic map from an $n$-dimensional compact Gauduchon manifold of positive total Chern scalar curvature to an $n$-dimensional compact K\"ahler manifold of nef canonical line bundle must be totally degenerate.
\end{cor}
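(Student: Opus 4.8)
The plan is to mimic the proof of Corollary \ref{cor_2.2}, replacing the role of the semiample fibration with a direct Calabi--Yau--type construction. Suppose for contradiction that $f:X\to Y$ is a non-degenerate holomorphic map, where $(X,\omega)$ is an $n$-dimensional compact Gauduchon manifold with $\int_XR_\omega\,\omega^n>0$ and $Y$ is an $n$-dimensional compact K\"ahler manifold with $K_Y$ nef. The first step is to produce, on $Y$, a K\"ahler metric $\eta$ whose Chern Ricci curvature is ``almost nonpositive'' in a suitable quantitative sense. Since $K_Y$ is nef, for every $\varepsilon>0$ the class $2\pi c_1(K_Y)+\varepsilon[\omega_Y]$ is K\"ahler for any fixed K\"ahler metric $\omega_Y$ on $Y$; choosing a smooth representative $\chi_\varepsilon\ge 0$ is not quite enough, so instead I would fix a smooth closed $(1,1)$-form $\chi\in 2\pi c_1(K_Y)$ and, by Yau's theorem \cite{Y}, solve for a K\"ahler metric $\eta_\varepsilon$ with $Ric(\eta_\varepsilon)=-\chi+\varepsilon\,\omega_Y$ in the K\"ahler class $[\chi]+\varepsilon[\omega_Y]$ (which is K\"ahler by nefness for all small $\varepsilon>0$).

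The second step is to apply Theorem \ref{main_de} to $f:X\to(Y,\eta_\varepsilon)$. Using the compactness of $X$ (so that $e^{(n-1)\psi}$ is bounded) and the fact that $\omega$ is Gauduchon (so $\psi$ may be taken constant, and the integral $\int_Xf^*[\beta]\wedge\omega^{n-1}$ depends only on the class $[\beta]$), inequality \eqref{ineq_de} reads
\begin{align}
\int_XR_\omega\,\omega^n\le n\int_Xf^*(Ric(\eta_\varepsilon))\wedge\omega^{n-1}
=-n\int_Xf^*\chi\wedge\omega^{n-1}+n\varepsilon\int_Xf^*\omega_Y\wedge\omega^{n-1}.\nonumber
\end{align}
Since $[\chi]=2\pi c_1(K_Y)$ is nef, $\int_Xf^*\chi\wedge\omega^{n-1}=\int_Xf^*[\chi]\wedge\omega^{n-1}\ge 0$ (here one uses that $\omega$ is Gauduchon so this cohomological pairing is well-defined and that the pullback of a nef class pairs nonnegatively against $\omega^{n-1}$, as $\omega^{n-1}$ determines a positive Gauduchon-type current). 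Meanwhile $\int_Xf^*\omega_Y\wedge\omega^{n-1}$ is a fixed finite number independent of $\varepsilon$. Letting $\varepsilon\to 0^+$ yields $\int_XR_\omega\,\omega^n\le 0$, contradicting the hypothesis $\int_XR_\omega\,\omega^n>0$.

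The main obstacle is the step asserting $\int_Xf^*\chi\wedge\omega^{n-1}\ge 0$ when $\chi$ is merely nef rather than semipositive. Unlike the semiample situation in Corollary \ref{cor_2.2}, here $K_Y$ being nef does not give us a smooth nonnegative representative. I expect this to be handled either by the standard fact that on a compact K\"ahler manifold a nef class can be represented by forms $\chi_\delta$ with $\chi_\delta\ge -\delta\omega_Y$ for every $\delta>0$, so that $\int_Xf^*\chi\wedge\omega^{n-1}=\int_Xf^*\chi_\delta\wedge\omega^{n-1}\ge -\delta\int_Xf^*\omega_Y\wedge\omega^{n-1}$ and then $\delta\to0$; or, more cleanly, by folding this $\delta$ directly into the choice of $\eta_\varepsilon$ above (solve $Ric(\eta_\varepsilon)=-\chi_\varepsilon+\varepsilon\,\omega_Y$ with $\chi_\varepsilon\ge-\varepsilon\omega_Y$ a representative of $2\pi c_1(K_Y)$, so that $-\chi_\varepsilon\le\varepsilon\omega_Y$ pointwise and the right-hand side of \eqref{ineq_de} is at once $\le 2n\varepsilon\int_Xf^*\omega_Y\wedge\omega^{n-1}\to 0$). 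Either way, non-degeneracy of $f$ is used only implicitly, to ensure Theorem \ref{main_de} applies; the contradiction itself comes purely from the curvature integral, so no local biholomorphism argument as in Corollary \ref{cor_2.2} is needed. One should double-check that Yau's theorem is being invoked in an admissible class (K\"ahler, which holds by nefness for $\varepsilon>0$) and that the Gauduchon hypothesis on $\omega$ is genuinely what makes the cohomological bound on the right-hand side legitimate.
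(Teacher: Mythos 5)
Your overall strategy coincides with the paper's: use nefness of $K_Y$ together with Yau's theorem to produce K\"ahler metrics on $Y$ whose Chern Ricci curvature is bounded above by an arbitrarily small positive multiple of a fixed K\"ahler form, feed these into \eqref{ineq_de} (with $\psi$ constant since $\omega$ is Gauduchon), and let the parameter tend to zero to force $\int_XR_\omega\,\omega^n\le0$, a contradiction. However, the way you invoke Yau's theorem is not admissible as written: the Chern Ricci form of any K\"ahler metric on $Y$ represents $2\pi c_1(Y)=-2\pi c_1(K_Y)$, so the equation $Ric(\eta_\varepsilon)=-\chi+\varepsilon\,\omega_Y$ with $\chi$ a \emph{fixed} representative of $2\pi c_1(K_Y)$ has no solution, because $-\chi+\varepsilon\,\omega_Y$ lies in $-2\pi c_1(K_Y)+\varepsilon[\omega_Y]$ rather than in $-2\pi c_1(K_Y)$; what Yau's theorem lets you choose freely is the K\"ahler class of the solution metric, not the cohomology class of its Ricci form. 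The same class mismatch recurs in your ``cleaner'' variant, where you prescribe $Ric(\eta_\varepsilon)=-\chi_\varepsilon+\varepsilon\,\omega_Y$ with $\chi_\varepsilon\in2\pi c_1(K_Y)$.

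The repair is immediate and is exactly the paper's construction: since $K_Y$ is nef, $2\pi c_1(K_Y)+\varepsilon[\omega_Y]$ is a K\"ahler class, so choose a K\"ahler form $\chi_\varepsilon$ in it; then $-\chi_\varepsilon+\varepsilon\,\omega_Y$ does represent $2\pi c_1(Y)$, and Yau's theorem gives a K\"ahler metric $\eta_\varepsilon$ with $Ric(\eta_\varepsilon)=-\chi_\varepsilon+\varepsilon\,\omega_Y\le\varepsilon\,\omega_Y$ pointwise. (Equivalently, take $\chi_\varepsilon\in2\pi c_1(K_Y)$ with $\chi_\varepsilon\ge-\varepsilon\,\omega_Y$ and solve $Ric(\eta_\varepsilon)=-\chi_\varepsilon$.) With this pointwise bound, wedging with $\omega^{n-1}\ge0$ yields $n\int_Xf^*(Ric(\eta_\varepsilon))\wedge\omega^{n-1}\le n\varepsilon\int_Xf^*\omega_Y\wedge\omega^{n-1}\to0$, so your digression on the sign of the cohomological pairing $\int_Xf^*\chi\wedge\omega^{n-1}$ for a merely nef class becomes unnecessary (that argument, resting on $\partial\bar\partial(\omega^{n-1})=0$ and representatives $\chi_\delta\ge-\delta\,\omega_Y$, is nevertheless correct and would also close the gap). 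Your observation that non-degeneracy of $f$ enters only through the applicability of Theorem \ref{main_de}, with no local biholomorphism argument needed, agrees with the paper's proof of Corollary \ref{cor_2.3}.
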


\begin{rem}
We should point out that Corollary \ref{cor_2.3} is essentially not new. In fact, by a recent work of Yang \cite[Theorem 4.1]{Y19}, a compact complex manifold admitting a Gauduchon metric of positive total Chern scalar curvature also admits a Hermitian metric of (pointwise) positive Chern scalar curvature. Therefore, using Yang's result, Corollary \ref{cor_2.3} follows directly from the classical maximum principle arguments in \cite{Chern,L,Y}. 
\end{rem}

Here, using the integral inequality \eqref{ineq_de}, we shall provide an alternative proof for Corollary \ref{cor_2.3} without involving Yang's result.

\begin{proof}
Assume a contradiction that there is a non-degenerate holomorphic map $f:X\to Y$ between two $n$-dimensional compact complex manifolds, where $X$ admits a Gauduchon metric $\omega$ of positive total Chern scalar curvature, i.e.
\begin{equation}
\int_XR_\omega \omega^n=:\delta_0>0\nonumber,
\end{equation}
and $Y$ is a compact K\"ahler manifold of nef canonical line bundle. 
By the definition of nefness and Yau's fundamental theorem \cite{Y} we may fix a sequence of K\"ahler metrics $\eta_i$, $i=0,1,2,...,$ on $Y$ such that 
$$Ric(\eta_i)\le i^{-1}\eta_0.$$
Then we easily see that 
\begin{align}
n\int_Xf^*(Ric(\eta_i))\wedge\omega^{n-1}&\le\frac ni\int_Xf^*(\eta_0)\wedge\omega^{n-1}\nonumber\\
&\le\frac{\delta_0}{2}\nonumber\\
&<\int_XR_\omega \omega^n\nonumber
\end{align}
for sufficiently large $i$, contradicting \eqref{ineq_de}.
\par Corollary \ref{cor_2.3} is proved.
\end{proof}

\subsection{Infinite-time singularity types of the K\"ahler-Ricci flow} We would like to discuss how our integral inequality \eqref{main.ineq} relates to the study of infinite-time singularity types of the K\"ahler-Ricci flow. Let $(Y,\eta_0)$ be an $m$-dimensional compact K\"ahler manifold. Consider the K\"ahler-Ricci flow $\eta=\eta(t)$, $t\in[0,T)$ on $Y$ running from $\eta_0$:
\begin{equation}\label{krf}
\left\{
\begin{aligned}
\partial_t\eta(t)&=-Ric(\eta(t))\\
\eta(0)&=\eta_0.
\end{aligned}
\right.
\end{equation}
We assume the canonical line bundle $K_Y$ of $Y$ is nef, which is equivalent to that the K\"ahler-Ricci flow running from an arbitrary K\"ahler metric can be smoothly solved for all $t\in[0,\infty)$ (see \cite{C,TZo,Ts}). Recall from Hamilton \cite{Ha93} that the infinite-time singularities of the K\"ahler-Ricci flow are divided into three types. Precisely, a long-time solution $\eta(t)$, $t\in[0,\infty)$, to the K\"ahler-Ricci flow \eqref{krf} is of
\begin{itemize}
\item type \textrm{IIb} if
\begin{equation}\label{defn_ii}
\limsup_{t\to\infty}\left(\sup_{Y}t|Rm(\eta(t))|_{\eta(t)}\right)=\infty,\nonumber
\end{equation}
\item  type  \textrm{IIIa} if
\begin{equation}\label{defn_iii}
\limsup_{t\to\infty}\left(\sup_{Y}t|Rm(\eta(t))|_{\eta(t)}\right)\in(0,\infty)\nonumber,
\end{equation}
\item type  \textrm{IIIb} if
\begin{equation}\label{defn_iii}
\limsup_{t\to\infty}\left(\sup_{Y}t|Rm(\eta(t))|_{\eta(t)}\right)=0\nonumber,
\end{equation}
\end{itemize}
The infinite-time singularity types are about the long-time boundedness of curvature tensor of the K\"ahler-Ricci flow, which are crucial in understanding the singularity models of the K\"ahler-Ricci flow. There are many progresses in classifying infinite-time singularity types of the K\"ahler-Ricci flow in recent years, assuming the Abundance Conjecture (i.e. the canonical line bundle is semiample), see   \cite{FZo,HeTo,ToZyg,Zys17,Zys19,FZys}. In the surface case, a complete classification is obtained by Tosatti-Y.G.Zhang \cite{ToZyg}. In general, without assuming the Abundance Conjecture, it seems not much progresses in determining the singularity type of the K\"ahler-Ricci flow. A conjecture raised by Tosatti \cite[Conjecture 6.7]{To.kawa} predicts that for the K\"ahler-Ricci flow on any compact K\"ahler manifold with nef canonical line bundle, the infinite-time singularity type does not depend on the choice of the initial metric, which is confirmed in our previous work \cite[Corolloar 1.5]{Zys17} in $3$-dimensional case.

\par Here we would like to first recall a useful criterion for type \textrm{II}b singularities due to Tosatti-Y.G.Zhang \cite{ToZyg} without assuming the Abundance Conjecture. 

\begin{prop}\label{tz}\cite[Proposition 1.4]{ToZyg}
Let $Y$ be a compact K\"ahler manifold of nef canonical line bundle. Assume there is a non-constant holomorphic map $f:\mathbb{CP}^1\to Y$ such that $\int_{\mathbb{CP}^1}f^*c_1(Y)=0$, then any solution to the K\"ahler-Ricci flow \eqref{krf} on $Y$ must be of type \textrm{II}b.
\end{prop}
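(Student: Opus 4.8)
The plan is to argue by contradiction, exploiting the fact that Proposition~\ref{prop_cp1} (equivalently, the case $X=\mathbb{CP}^1$ of Theorem~\ref{main_const}) forces a positive lower bound for $\sup_Y H^{\eta(t)}$ along the flow, whereas the failure of type \textrm{IIb} would make this quantity decay to $0$. The hypothesis $\int_{\mathbb{CP}^1}f^*c_1(Y)=0$ enters precisely to keep the relevant cohomological pairing bounded (in fact constant) in $t$.

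First I would record the cohomological behaviour of the flow. Since $K_Y$ is nef, the solution $\eta(t)$ to \eqref{krf} exists for all $t\in[0,\infty)$ and, at the level of cohomology, $[\eta(t)]=[\eta_0]+2\pi t\,c_1(K_Y)$ in $H^{1,1}(Y,\mathbb{R})$. Pairing with the curve class $f_*[\mathbb{CP}^1]$ and using $\int_{\mathbb{CP}^1}f^*c_1(Y)=0$ (hence $\int_{\mathbb{CP}^1}f^*c_1(K_Y)=0$) gives
\[
\int_{\mathbb{CP}^1}f^*\eta(t)=\int_{\mathbb{CP}^1}f^*[\eta_0]=:A,
\]
independent of $t$; moreover $A>0$ because $\eta_0$ is K\"ahler and $f$ is non-constant.

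Now suppose for contradiction that some solution $\eta(t)$ is \emph{not} of type \textrm{IIb}, i.e. $\limsup_{t\to\infty}\big(\sup_Y t\,|Rm(\eta(t))|_{\eta(t)}\big)<\infty$. Then there are constants $C,t_0>0$ with $|Rm(\eta(t))|_{\eta(t)}\le C/t$ for all $t\ge t_0$. Since the holomorphic sectional curvature is, at each point, a component of the curvature tensor, $\sup_Y H^{\eta(t)}\le C'/t$ for $t\ge t_0$, so $\sup_Y H^{\eta(t)}\to0$ as $t\to\infty$. On the other hand, applying Proposition~\ref{prop_cp1} to the non-constant holomorphic map $f:\mathbb{CP}^1\to(Y,\eta(t))$ for each fixed $t$ yields $(\sup_Y H^{\eta(t)})\cdot\int_{\mathbb{CP}^1}f^*\eta(t)\ge 4\pi$, whence $\sup_Y H^{\eta(t)}\ge 4\pi/A>0$ for every $t\ge0$. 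This contradicts $\sup_Y H^{\eta(t)}\to0$, and therefore every solution to \eqref{krf} on $Y$ must be of type \textrm{IIb}.

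The only genuinely delicate point is the $t$-independence of $\int_{\mathbb{CP}^1}f^*\eta(t)$, which relies entirely on the hypothesis $\int_{\mathbb{CP}^1}f^*c_1(Y)=0$; without it the integral would grow linearly in $t$ and the comparison with $4\pi/\int_{\mathbb{CP}^1}f^*\eta(t)$ would give no information. Everything else — the long-time existence of the solution, the pointwise bound $\sup_Y H^{\eta(t)}\lesssim_n \sup_Y|Rm(\eta(t))|_{\eta(t)}$, and the positivity $A>0$ — is routine. In this sense Proposition~\ref{tz} is essentially a corollary of Proposition~\ref{prop_cp1} (hence of Theorem~\ref{main_const}); in particular, only \emph{some} positive, $t$-independent lower bound for $\sup_Y H^{\eta(t)}$ is needed, so the sharp constant $4\pi$ plays no essential role in this application.
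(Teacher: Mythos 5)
Your proof is correct and follows essentially the same route as the paper: there, Proposition \ref{tz} is recovered as the special case $X=\mathbb{CP}^1$ of Corollary \ref{cor_st}, whose proof likewise combines the integral inequality of Theorem \ref{main_const} (your Proposition \ref{prop_cp1}) with the cohomological evolution $[\eta(t)]=[\eta_0]+2\pi t\,c_1(K_Y)$ and the hypothesis $\int_{\mathbb{CP}^1}f^*c_1(Y)=0$ to get a $t$-independent positive lower bound on $\sup_YH^{\eta(t)}$, which is incompatible with any bound $|Rm(\eta(t))|_{\eta(t)}\le C/t$. The only difference is presentational: the paper states directly that $\limsup_{t\to\infty}\sup_Y tH^{\eta(t)}=+\infty$, whereas you phrase it as a contradiction argument.
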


In their proof \cite[Section 4, Remark 4.1]{ToZyg} for Proposition \ref{tz}, an integral inequality for a non-constant holomorphic map $f:\mathbb{CP}^1\to Y$ is derived using ``$\epsilon$-regularity argument". Our Theorem \ref{main_const} provides an alternative argument for \cite[Section 4, Remark 4.1]{ToZyg}, and generalizes it to a more general setting. Consequently, we can also generalize the criterion in Proposition \ref{tz} as follows.

\begin{cor}\label{cor_st}
Let $Y$ be a compact K\"ahler manifold of nef canonical line bundle. Assume there is a non-constant holomorphic map $f:(X,\omega)\to Y$, where $(X,\omega)$ is a compact Gauduchon manifold of positive total second Ricci curvature, then any solution to the K\"ahler-Ricci flow \eqref{krf} on $Y$ must be of type \textrm{II}b or type \textrm{III}a. If furthermore $\int_Xf^*c_1(Y)\wedge\omega^{n-1}=0$ (where $n=dim_{\mathbb C}X$), then any solution to the K\"ahler-Ricci flow \eqref{krf} on $Y$ must be of type \textrm{II}b.
\end{cor}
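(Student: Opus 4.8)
The plan is to combine Theorem \ref{main_const} with the standard facts about the normalized K\"ahler-Ricci flow on a manifold with nef canonical line bundle, following the strategy of Tosatti--Y.G. Zhang for Proposition \ref{tz}. First I would argue by contradiction: suppose the solution $\eta(t)$, $t\in[0,\infty)$, is of type \textrm{III}b, i.e. $\sup_Y t|Rm(\eta(t))|_{\eta(t)}\to 0$ as $t\to\infty$. In particular there is a function $\varepsilon(t)\to 0$ with $|Rm(\eta(t))|_{\eta(t)}\le \varepsilon(t)/t$, which forces the holomorphic sectional curvature to satisfy $H^{\eta(t)}\le C\varepsilon(t)/t$ pointwise on $Y$ for some dimensional constant $C$; consequently $\kappa_{\eta(t)}\le C\varepsilon(t)/t$ on $Y$ once $t$ is large (so that $H^{\eta(t)}$ is small, and the factor $\rho$ in Notation \ref{hc} contributes only a bounded constant). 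Now apply Theorem \ref{main_const} to the non-constant holomorphic map $f:(X,\omega)\to(Y,\eta(t))$ with $\omega$ the given Gauduchon metric (so $\phi$ may be taken to be $0$): we obtain
\begin{equation}
\int_X\lambda_\omega\,\omega^n\le n\int_X f^*\kappa_{\eta(t)}\cdot f^*\eta(t)\wedge\omega^{n-1}\le \frac{Cn\varepsilon(t)}{t}\int_X f^*\eta(t)\wedge\omega^{n-1}.\nonumber
\end{equation}
The left side is the positive constant $\delta_0:=\int_X\lambda_\omega\,\omega^n>0$ (positive total second Ricci curvature), independent of $t$. So the contradiction comes down to showing the right side tends to $0$, for which it suffices that $\frac1t\int_X f^*\eta(t)\wedge\omega^{n-1}$ stays bounded (indeed even a polynomial-in-$t$ bound would be killed by no extra room, so we genuinely need it bounded, or at worst $o(t/\varepsilon(t))$).

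The key analytic input is the cohomological behavior of the flow. Writing $\eta(t)\in 2\pi t\,c_1(K_Y)+[\eta_0]$ (the standard cohomological form of the unnormalized K\"ahler-Ricci flow on a nef-canonical manifold), the quantity $\int_X f^*\eta(t)\wedge\omega^{n-1}$ is, since $\omega$ is Gauduchon, a cohomological pairing: it equals $2\pi t\int_X f^*c_1(K_Y)\wedge\{\omega^{n-1}\}_{BC}+\int_X f^*[\eta_0]\wedge\{\omega^{n-1}\}_{BC}$, where $\{\omega^{n-1}\}_{BC}$ makes sense against $\partial\bar\partial$-closed $(n-1,n-1)$-forms precisely because $\omega$ is Gauduchon. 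Thus $\frac1t\int_X f^*\eta(t)\wedge\omega^{n-1}\to 2\pi\int_X f^*c_1(K_Y)\wedge\{\omega^{n-1}\}_{BC}$, which is a finite number; hence the right-hand side above is $O(\varepsilon(t))\to 0$, contradicting $\delta_0>0$. This rules out type \textrm{III}b, leaving type \textrm{II}b or type \textrm{III}a, which is the first assertion. For the second assertion, assume in addition $\int_X f^*c_1(K_Y)\wedge\{\omega^{n-1}\}_{BC}=0$ (the pairing is well-defined exactly because $\omega$ is Gauduchon; note $c_1(Y)=-c_1(K_Y)$, so the stated hypothesis $\int_X f^*c_1(Y)\wedge\omega^{n-1}=0$ is the same condition). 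Then $\int_X f^*\eta(t)\wedge\omega^{n-1}=\int_X f^*[\eta_0]\wedge\{\omega^{n-1}\}_{BC}$ is a constant $C_0$ independent of $t$, and if $\eta(t)$ were of type \textrm{III}a, $|Rm(\eta(t))|_{\eta(t)}\le C/t$ gives $\kappa_{\eta(t)}\le C'/t$ for $t$ large, so Theorem \ref{main_const} yields $\delta_0\le nC'C_0/t\to 0$, again a contradiction; hence the flow is of type \textrm{II}b.

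The main obstacle, and the step requiring care, is the curvature-to-$\kappa_\eta$ comparison together with the cohomological identification of $\int_X f^*\eta(t)\wedge\omega^{n-1}$. For the former one must be a little careful that the bound on the full Riemann curvature tensor $|Rm(\eta(t))|_{\eta(t)}$ controls the holomorphic sectional curvature $H^{\eta(t)}$ uniformly in $t$ (this is a pointwise linear-algebra estimate, uniform by compactness and the fixed dimension) and that, for $t$ large, $H^{\eta(t)}$ being small makes the factor $\rho(H^{\eta(t)})\in\{\frac{m+1}{2m},1\}$ irrelevant to an order-of-magnitude bound on $\kappa_{\eta(t)}$ — though even without smallness one has $\kappa_{\eta(t)}\le|H^{\eta(t)}|\le C|Rm(\eta(t))|_{\eta(t)}$ since $\rho\le 1$. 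For the latter, the point is that $\int_X f^*\eta(t)\wedge\omega^{n-1}$ depends only on the Bott--Chern class of $\eta(t)$ because $d(f^*\eta(t))\wedge\omega^{n-1}$ integrates against the Gauduchon condition $\partial\bar\partial(\omega^{n-1})=0$ via Stokes; this is exactly the same mechanism used in the proof of Corollary \ref{cor_1.1} ("$e^\phi\omega$ being Gauduchon implies that the integration ... depends only on the class of $\eta_i$"), applied here with $\phi=0$ and the one-parameter family $\eta(t)$, whose class moves affinely in $t$. Once these two points are in place the contradiction is immediate; no maximum principle or delicate PDE estimate beyond the known long-time existence and cohomological evolution of the flow is needed.
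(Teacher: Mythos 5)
Your argument is correct and is essentially the paper's proof: both combine Theorem \ref{main_const} with the cohomological evolution $[\eta(t)]=[\eta_0]+2\pi t\,c_1(K_Y)$, the Gauduchon invariance of $\int_X f^*\eta(t)\wedge\omega^{n-1}$ under change of representative, and the pointwise comparison $\kappa_{\eta(t)}\le C|Rm(\eta(t))|_{\eta(t)}$. The only difference is presentational — you argue by contradiction against type \textrm{III}b (resp.\ \textrm{III}a), while the paper directly derives the lower bound $\limsup_{t\to\infty}\sup_Y tH^{\eta(t)}\ge \frac{n^{-1}\int_X\lambda_\omega\omega^n}{2\pi\int_X f^*c_1(K_Y)\wedge\omega^{n-1}}$ (interpreted as $+\infty$ when the denominator vanishes) — which is logically the same computation.
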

\begin{proof}
The proof uses the ideas in Corollaries \ref{cor_1.1} and \ref{cor_1.2}. Given a long-time solution $\eta(t)$ to the K\"ahler-Ricci flow \eqref{krf} on $Y$. Similar to Corollaries \ref{cor_1.1} and \ref{cor_1.2}, we have
\begin{align}
\sup_YH^{\eta(t)}&\ge\frac{\int_X\lambda_{\omega}\omega^n}{n\int_Xf^*\eta(t)\wedge\omega^{n-1}}\nonumber\\
&=\frac{n^{-1}\int_X\lambda_{\omega}\omega^n}{\int_Xf^*\eta_0\wedge\omega^{n-1}+t\cdot2\pi \int_Xf^*c_1(K_Y)\wedge\omega^{n-1}}\nonumber,
\end{align}
where we have used that $[\eta(t)]=[\eta_0]+t\cdot2\pi c_1(K_Y)$ and $\omega$ is Gauduchon. Therefore,
as $t\to\infty$, 
\begin{itemize}
\item $\limsup_{t\to\infty}\sup_YtH^{\eta(t)}\ge\frac{n^{-1}\int_X\lambda_{\omega}\omega^n}{2\pi \int_Xf^*c_1(K_Y)\wedge\omega^{n-1}}>0$, if $\int_Xf^*c_1(K_Y)\wedge\omega^{n-1}>0$;
\item $\limsup_{t\to\infty}\sup_YtH^{\eta(t)}=+\infty$, if $\int_Xf^*c_1(K_Y)\wedge\omega^{n-1}=0$.
\end{itemize}

Corollary \ref{cor_st} is proved.
\end{proof}

\begin{rem}
In Corollary \ref{cor_st}, if $Y$ is a compact complex manifold and we are given a long-time solution to the \emph{Chern-Ricci flow} (see \cite[Equation (1.1)]{TW}), say $\eta(t)$, on $Y$, then similar curvature estimates hold if we replace $H^{\eta(t)}$ by $BK^{\eta(t)}$ and $c_1(Y)$ by the first Bott-Chern class $c_1^{BC}(Y)$.
\end{rem}

\subsection{Lower bounds for $\mu_\alpha$ and nef threshold}\label{final_rem}
In this subsection, we mention that the integral inequalities \eqref{main.ineq} and \eqref{ineq_de} imply effective estimates for the number $\mu_\alpha$ defined in Definition \ref{defn1} and the nef threshold $\nu_\alpha$ of $\alpha$, where $\alpha$ is a K\"ahler class on a compact K\"ahler manifold $Y$ and $\nu_\alpha:=\inf\{s\in\mathbb R|2\pi c_1(K_Y)+s\alpha$ is nef$\}$. Namely, 
\begin{itemize}
\item given any non-constant holomorphic map $f:(X,\omega)\to Y$ with $(X,\omega)$ a compact Gauduchon manifold, we have
$$\mu_\alpha\ge\frac{\int_X\lambda_{\omega}\omega^n}{n\int_Xf^*\alpha\wedge\omega^{n-1}}.$$
\item given any non-degenerate holomorphic map $f:(X,\omega)\to Y$ with $(X,\omega)$ a compact Gauduchon manifold and $dim_{\mathbb C}X=dim_{\mathbb C}Y$, we have
$$\nu_\alpha\ge\frac{\int_XR_{\omega}\omega^n}{n\int_Xf^*\alpha\wedge\omega^{n-1}}.$$
\end{itemize}
Therefore, we have obtained the desired effectiveness results discussed in Subsection \ref{moti}, which motivate our study, in a much more general setting. These may also be regarded as quantitative versions of Corollaries \ref{cor_1.2} and \ref{cor_2.3}, respectively.

\subsection{Characterizing the equality case} For integral inequality \eqref{main.ineq} in Theorem \ref{main_const}, we may wonder: can we conclude any particular properties if the equality is achieved? While that inequality always holds, we expect that the equality case would give restrictions on the involved data. To be more precise, let's focus on a special setting as follows.
\par In Theorem \ref{main_const}, if we choose $X=Y$ be an $n$-dimensional compact K\"ahler manifold and $f=Id_X:X\to X$ the identity map, then for any K\"ahler metrics $\omega$ and $\eta$ on $X$ we have
\begin{equation}\label{ineq_ut_0}
\int_X\lambda_\omega\omega^n\le n\int_X\kappa_\eta\cdot\eta\wedge\omega^{n-1}.
\end{equation}

Then it may be natural to ask:
\begin{ques}
Can we conclude any particular properties if the equality in \eqref{ineq_ut_0} is achieved by two K\"ahler metrics $\omega$ and $\eta$ on $X$?
\end{ques}

It is possible to answer the above question for some special settings.

\begin{prop}[$\eta$ being a K\"ahler-Einstein metric]
Let $(X,\eta)$ be an $n$-dimensional compact K\"ahler-Einstein manifold. For any given K\"ahler metric $\omega$ on $X$ we have
\begin{equation}\label{ineq_ut_00}
\int_X\lambda_\omega\omega^n\le n\int_X\kappa_\eta\cdot\eta\wedge\omega^{n-1},
\end{equation}
and the equality holds for some $\omega$ if and only if both $\eta$ and $\omega$ are of zero or negative constant holomorphic sectional curvature.
\end{prop}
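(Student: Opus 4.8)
The plan is to get \eqref{ineq_ut_00} for free and then analyze equality by tracing through the proof of Theorem~\ref{main_const}. Indeed \eqref{ineq_ut_00} is just Theorem~\ref{main_const} with $X=Y$ and $f=\mathrm{Id}_X$ (a K\"ahler metric on a compact manifold is Gauduchon, so $\phi$ is constant); throughout I assume $n=\dim_{\mathbb C}X\ge2$, since for $n=1$ both sides of \eqref{ineq_ut_00} equal $2\pi\chi(X)$ by Gauss--Bonnet and the dimension-sensitive arguments below break down. For the ``if'' direction I would use that a metric of constant holomorphic sectional curvature $c$ has curvature tensor $\tfrac{c}{2}(g_{i\bar j}g_{k\bar l}+g_{i\bar l}g_{k\bar j})$, hence $Ric^{(2)}=Ric=\tfrac{(n+1)c}{2}\omega$: if $\omega$ (resp.\ $\eta$) has constant holomorphic sectional curvature $c_\omega\le0$ (resp.\ $c_\eta\le0$) then $\lambda_\omega\equiv\tfrac{(n+1)c_\omega}{2}$ and, since $H^\eta\equiv c_\eta\le0$, $\kappa_\eta\equiv\tfrac{n+1}{2n}c_\eta$ by Notation~\ref{hc}; as $\omega,\eta$ are K\"ahler--Einstein of the same sign, $[\omega]$ and $[\eta]$ are proportional to $c_1(X)$ (with $c_\omega=0\iff c_\eta=0$), which gives $c_\omega\int_X\omega^n=c_\eta\int_X\eta\wedge\omega^{n-1}$, and substituting into the two sides of \eqref{ineq_ut_00} yields equality.

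For the ``only if'' direction I would run the proof of Theorem~\ref{main_const} with $f=\mathrm{Id}$: here $\Lambda=\mathrm{tr}_\omega\eta>0$ everywhere, the bad set $V$ is empty, and one may take $\epsilon=0$, so the argument delivers the pointwise estimate $\Delta_\omega\log\Lambda\ge\lambda_\omega-\kappa_\eta\Lambda$. Integrating this against $\omega^n$ and using that equality holds in \eqref{ineq_ut_00} forces $\Delta_\omega\log\Lambda=\lambda_\omega-\kappa_\eta\Lambda$ pointwise, hence equality in each of the three estimates that went into it: (i) the first-eigenvalue bound, $\langle Ric^{(2)}(\omega),\eta\rangle_\omega=\lambda_\omega\Lambda$; (ii) Royden's trick, $R^\eta((g^\omega)^{-1},(g^\omega)^{-1})=\kappa_\eta\Lambda^2$; (iii) the Kato inequality, $|\nabla df|^2=|d|df||^2$. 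From (i), since $Ric^{(2)}(\omega)\ge\lambda_\omega\omega$ and $\eta>0$, one gets $Ric^{(2)}(\omega)=\lambda_\omega\omega$, so ($\omega$ K\"ahler) $Ric(\omega)=\lambda_\omega\omega$; closedness of both forms and $n\ge2$ then make $\lambda_\omega$ a constant $a$, so $\omega$ is K\"ahler--Einstein. From (iii), equality in Kato forces $\nabla^{1,0}(df)=\theta\otimes df$ for a $(1,0)$-form $\theta$; but for $f=\mathrm{Id}$ the tensor $\nabla^{1,0}(df)$ is the difference of the two Chern connections, which are torsion-free since $\omega,\eta$ are K\"ahler, hence symmetric in the covariant indices, and symmetry of $\theta_k\delta^\alpha_i$ in $(k,i)$ forces $\theta=0$ (this is where $n\ge2$ enters); so $\omega$ and $\eta$ share a Chern connection, whence $Ric(\omega)=Ric(\eta)=:c\,\eta$, i.e.\ $a\omega=c\,\eta$.

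Now split on $a$. If $a\ne0$ then $\omega=\tfrac{c}{a}\eta$, so in (ii) the positive form $(g^\omega)^{-1}$ is proportional to $(g^\eta)^{-1}$ and (ii) becomes $R_\eta=n^2\kappa_\eta$; combined with the always-valid $R_\eta\le\tfrac{n(n+1)}{2}H^\eta$ (Chern scalar curvature versus the supremal holomorphic sectional curvature $H^\eta$ of Notation~\ref{hc}) and $\tfrac{n(n+1)}{2}H^\eta\le n^2\kappa_\eta$ (an equality exactly where $H^\eta\le0$, by the definition of $\rho$ and $n\ge2$), this forces $H^\eta\le0$ everywhere and $R_\eta=\tfrac{n(n+1)}{2}H^\eta$; the latter, with $R_\eta=\tfrac{n(n+1)}{2}\overline{H^\eta}$ ($\overline{H^\eta}$ the average of the holomorphic sectional curvature over unit directions), gives $\overline{H^\eta}=H^\eta$, i.e.\ the holomorphic sectional curvature of $\eta$ is direction-independent at every point; a complex Schur lemma ($n\ge2$) then makes it a global constant $c'\le0$, and $\omega=\tfrac{c}{a}\eta$ has constant holomorphic sectional curvature $\tfrac{a}{c}c'\le0$. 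If $a=0$ then $c=0$ and $\omega,\eta$ are both Ricci-flat; then $\overline{H^\eta}\equiv0$, so $H^\eta\ge0$, while equality in (ii) and the positive-definiteness of $(g^\omega)^{-1}$ force $H^\eta\le0$ via the Cauchy--Schwarz (rank-one) step in Royden's trick, using $n\ge2$; hence $H^\eta\equiv0$, so at each point the holomorphic sectional curvature is $\le0$ with vanishing average, hence identically zero, so the curvature tensor of $\eta$ vanishes and $\eta$ is flat; then $X$ is a torus quotient and, by uniqueness of the Ricci-flat K\"ahler metric in $[\omega]$ (Yau), $\omega$ is flat too, so both have constant holomorphic sectional curvature $0$.

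The step I expect to be the real obstacle is the equality analysis of Royden's trick in (ii), because its proof is not reproduced in the paper: one must expand the underlying Royden Lemma (polarization of the holomorphic sectional curvature) to identify which auxiliary inequalities are tight, and must keep careful track of the sign of $H^\eta$ --- excluding $H^\eta>0$ from the gap between $\rho=1$ and $\rho=\tfrac{n+1}{2n}$, and handling the borderline $H^\eta=0$ through the scalar-curvature/average identity together with the elementary fact that a holomorphic sectional curvature that is nonpositive with vanishing average must vanish identically. The remaining points needing some care --- the complex Schur lemma, the equality case of Kato's inequality for $\nabla^{1,0}(d\,\mathrm{Id})$, and the uniqueness of Ricci-flat K\"ahler metrics in the $c_1(X)=0$ borderline --- are standard.
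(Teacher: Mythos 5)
Your proof is correct in substance, but it takes a genuinely different and heavier route than the paper for the equality analysis. The paper never revisits the proof of Theorem \ref{main_const}: it invokes Berger's averaging identity $\int_{\mathbb{CP}^{n-1}}H^\eta_x\,\omega_{FS}^{n-1}=\tfrac{2}{n(n+1)}R_\eta(x)$ together with Notation \ref{hc} to get the pointwise comparison $n\kappa_\eta\cdot\eta\ge\tfrac{n+1}{2}H^\eta\cdot\eta\ge\tfrac1nR_\eta\,\eta=Ric(\eta)$ (strict where $H^\eta>0$), and then uses the cohomological identity $\int_XRic(\eta)\wedge\omega^{n-1}=\int_XRic(\omega)\wedge\omega^{n-1}$ together with the pointwise bound $Ric(\omega)\wedge\omega^{n-1}\ge\lambda_\omega\,\omega^n$; equality in \eqref{ineq_ut_00} then forces $Ric(\omega)=\lambda_\omega\omega$ and $n\kappa_\eta\eta=Ric(\eta)$ pointwise, whence $H^\eta\le0$, $H^\eta$ coincides with its average (so the holomorphic sectional curvature is pointwise constant, hence globally constant since $\eta$ is Einstein), and $\omega$ is K\"ahler--Einstein, after which one concludes as you do via uniqueness in the negative case and flatness in the Ricci-flat case. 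You instead re-run the proof of Theorem \ref{main_const} with $f=\mathrm{Id}$ and $\epsilon=0$ and extract pointwise equality in the first-eigenvalue bound, in Royden's trick and in Kato's inequality; your Kato-equality step ($\nabla^{1,0}(d\,\mathrm{Id})=\theta\otimes d\,\mathrm{Id}$, then $\theta=0$ by symmetry of the two torsion-free Chern connections and $n\ge2$, so the connections agree and $\lambda_\omega\omega=Ric(\omega)=Ric(\eta)$) and the rank-one equality case of Royden's lemma replace the paper's Berger-plus-cohomology argument. This does work --- the step you flag can be carried out: when the supremal holomorphic sectional curvature is positive, equality in the final step of Royden's estimate forces the contracted tensor $g^{\bar ji}f^\alpha_i\overline{f^\beta_j}$ to have rank one, impossible for $f=\mathrm{Id}$ with $n\ge2$ --- but it imports the internals of Royden's lemma and of the Kato equality case, which the paper avoids entirely, and it reproves via connection-coincidence what the paper gets in one line from cohomology. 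What your route buys: it yields $a\omega=c\eta$ directly (no appeal to uniqueness of the negative K\"ahler--Einstein metric), it supplies the ``if'' direction explicitly (the paper leaves it implicit), and you rightly note that the equality statement needs $n\ge2$ (for $n=1$ both sides of \eqref{ineq_ut_00} equal $2\pi\chi(X)$), a restriction the paper does not state although its own proof (strictness in \eqref{ineq_ut_01} where $H^\eta>0$, and $d\lambda_\omega\wedge\omega=0\Rightarrow d\lambda_\omega=0$) also relies on it.
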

\begin{proof}
Firstly, recall a classical result of Berger: for a K\"ahler metric $\eta$ on $X$ and any point $x\in X$, we have
\begin{align}
\int_{W\in\mathbb{CP}^{n-1}}H^\eta_x(W)\omega_{FS}^{n-1}(W)=\frac{2}{n(n+1)}R_\eta(x),
\end{align}
where $\omega_{FS}$ is the Fubini-Study metric on $\mathbb{CP}^{n-1}$ with $\int_{\mathbb{CP}^{n-1}}\omega_{FS}^{n-1}=1$ and $H^\eta_x$ is regarded as a function defined on $\mathbb{CP}^{n-1}$. Combining the definition of $\kappa_\eta$ in Notation \ref{hc}(2) implies
\begin{align}\label{ineq_ut_01}
n\kappa_\eta\cdot\eta\ge\frac{n+1}{2}H^\eta\cdot\eta\ge\frac{1}{n}R_\eta\eta=Ric(\eta),
\end{align}
where the first inequality is strict at those points $x$ with $H^\eta_x>0$ and in the last equality we used $\eta$ is K\"ahler-Einstein.
Now, if there is a K\"ahler metric $\omega$ satisfying the equality in \eqref{ineq_ut_00}, we see
$$\int_X\lambda_\omega\omega^n=n\int_X\kappa_\eta\cdot\eta\wedge\omega^{n-1}\ge\int_XRic(\eta)\wedge\omega^{n-1}=\int_XRic(\omega)\wedge\omega^{n-1},$$
implying that
\begin{align}\label{eq_ke}
Ric(\omega)=\lambda_\omega\omega
\end{align}
and hence
$$n\kappa_\eta\cdot\eta=Ric(\eta).$$
It turns out that the inequalities in \eqref{ineq_ut_01} are all equalities. Therefore, the holomorphic sectional curvature of $\eta$ is nonpositive and \emph{pointwise} constant, equaling to $\frac{2}{n(n+1)}R_\eta$, and hence is a constant, since $\eta$ is K\"ahler-Einstein. 
\par On the other hand, the \eqref{eq_ke} implies that $\lambda_\omega$ is smooth, and then by differentiating \eqref{eq_ke} we see $d \lambda_\omega\equiv0$, i.e. $\lambda_\omega$ is a constant and $\omega$ is a K\"ahler-Einstien metric on $X$. Therefore, $\omega$ is also of constant holomorphic sectional curvature, since $(X,\eta)$ is a compact complex space form of zero or negative curvature. Note that in the negative curvature case, by uniqueness of negative K\"ahler-Einstein metric, $\omega$ must be proportional to $\eta$.
\end{proof}

By almost identical arguments, we also have

\begin{prop}[$\eta$ being a cscK metric]
Let $(X,\eta)$ be an $n$-dimensional compact K\"ahler manifold of constant scalar curvature. For any given K\"ahler metric $\omega$ on $X$ we have
\begin{equation}\label{ineq_ut_000}
\int_X\lambda_\omega\omega^n\le n\int_X\kappa_\eta\cdot\eta\wedge\omega^{n-1},
\end{equation}
and the equality holds for some $\omega\in[\eta]$ if and only if $\eta$ is of zero or negative constant holomorphic sectional curvature and $\omega=\eta$.
\end{prop}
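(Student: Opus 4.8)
The plan is to run the argument of the previous proposition almost verbatim; the only new feature is that when $\eta$ is merely cscK (rather than K\"ahler--Einstein) one no longer has $\frac1n R_\eta\eta=Ric(\eta)$ \emph{pointwise}, so I would replace that pointwise identity by an \emph{integrated} one, which survives precisely because $R_\eta$ is a constant and we now restrict to metrics $\omega\in[\eta]$. Concretely, Berger's average formula quoted above still gives, for every $x\in X$, the pointwise chain of inequalities between $(1,1)$-forms $n\kappa_\eta\,\eta\ge\frac{n+1}{2}H^\eta\,\eta\ge\frac1n R_\eta\,\eta$, exactly as in \eqref{ineq_ut_01}, where the first inequality is strict at those $x$ with $H^\eta_x>0$ and the second is strict at those $x$ where $\eta$ fails to have constant holomorphic sectional curvature.

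First I would wedge this with $\omega^{n-1}$ and integrate over $X$. Since $R_\eta$ is a constant, $\int_X\frac1n R_\eta\,\eta\wedge\omega^{n-1}=\frac{R_\eta}{n}\int_X\eta\wedge\omega^{n-1}=\frac{R_\eta}{n}\int_X\eta^n=\int_X Ric(\eta)\wedge\eta^{n-1}$, the last step being the defining relation $R_\eta=\big(n\int_X Ric(\eta)\wedge\eta^{n-1}\big)/\int_X\eta^n$; and because $[\omega]=[\eta]$ while $Ric(\omega)$ and $Ric(\eta)$ both represent $2\pi c_1(X)$, this cohomological quantity equals $\int_X Ric(\omega)\wedge\omega^{n-1}$. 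Finally $Ric^{(2)}(\omega)=Ric(\omega)\ge\lambda_\omega\omega$ gives $\int_X Ric(\omega)\wedge\omega^{n-1}\ge\int_X\lambda_\omega\omega^n$, which is \eqref{ineq_ut_000}.

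For the equality case, suppose equality holds for some $\omega\in[\eta]$; then every inequality above is an equality. Equality in $n\kappa_\eta\,\eta\ge\frac{n+1}{2}H^\eta\,\eta$ after integration forces, using that $\eta\wedge\omega^{n-1}$ is a positive volume form and continuity, that $\kappa_\eta=\frac{n+1}{2n}H^\eta$ everywhere, hence $H^\eta\le0$ on all of $X$. Equality in the Berger step forces $\eta$ to have pointwise constant holomorphic sectional curvature $\frac{2}{n(n+1)}R_\eta$, which, $R_\eta$ being constant, means $\eta$ has globally constant holomorphic sectional curvature, and this constant is $\le0$; in particular $\eta$ is K\"ahler--Einstein with $c_1(X)\le0$. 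Equality in $\int_X Ric(\omega)\wedge\omega^{n-1}\ge\int_X\lambda_\omega\omega^n$ together with $Ric(\omega)\ge\lambda_\omega\omega$ forces $Ric(\omega)=\lambda_\omega\omega$; hence $\lambda_\omega=\tfrac1n R_\omega$ is smooth, and differentiating the closed identity $Ric(\omega)=\lambda_\omega\omega$ yields $d\lambda_\omega\wedge\omega=0$, so $d\lambda_\omega=0$ and $\omega$ is K\"ahler--Einstein too (for $n=1$ the statement is degenerate, so one may assume $n\ge2$). Thus $\omega$ and $\eta$ are K\"ahler--Einstein metrics in the same class $[\eta]$ on a manifold with $c_1(X)\le0$, and uniqueness of K\"ahler--Einstein metrics (Aubin--Yau when $c_1(X)<0$; Yau's theorem when $c_1(X)=0$) gives $\omega=\eta$. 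Conversely, if $\eta$ has constant holomorphic sectional curvature $c\le0$ and $\omega=\eta$, then $\lambda_\eta\equiv\frac{(n+1)c}{2}$ and $\kappa_\eta\equiv\frac{(n+1)c}{2n}$, so both sides of \eqref{ineq_ut_000} equal $\frac{(n+1)c}{2}\int_X\eta^n$ and equality holds.

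The only genuinely new point to watch is the bookkeeping in the integrated step: in the K\"ahler--Einstein case one could take $\omega$ arbitrary, whereas here the restriction $\omega\in[\eta]$ is used twice in an essential way — to legitimize the cohomological substitution $\int_X\frac1n R_\eta\,\eta\wedge\omega^{n-1}=\int_X Ric(\omega)\wedge\omega^{n-1}$, and, at the end, to upgrade ``$\omega$ proportional to $\eta$'' to ``$\omega=\eta$'' via uniqueness. Beyond that I anticipate no real obstacle, the structure being identical to the preceding proposition, which is why the statement was advertised as following ``by almost identical arguments''.
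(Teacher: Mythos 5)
Your proposal is correct and is essentially the adaptation the paper intends when it states that the cscK case follows ``by almost identical arguments'': you replace the pointwise identity $\frac{1}{n}R_\eta\,\eta=Ric(\eta)$ used in the K\"ahler--Einstein case by the integrated/cohomological identity, which is precisely where the hypotheses that $R_\eta$ is constant and $\omega\in[\eta]$ enter, and your equality analysis (each link rigid, $Ric(\omega)=\lambda_\omega\omega$, pointwise constant $H^\eta\le0$ promoted to a global constant via constancy of $R_\eta$, then uniqueness of K\"ahler--Einstein metrics in the fixed class to upgrade to $\omega=\eta$) matches the paper's argument for the preceding proposition. The only slip is presentational: your chain proves \eqref{ineq_ut_000} only for $\omega\in[\eta]$, whereas the inequality is asserted for every K\"ahler metric $\omega$; for general $\omega$ it should simply be quoted from Theorem \ref{main_const} with $f=Id_X$ (i.e. \eqref{ineq_ut_0}), your chain then serving only for the equality discussion, where the restriction $\omega\in[\eta]$ is in force anyway.
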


\section*{Acknowledgements}
The author is grateful to professor Huai-Dong Cao for valuable discussions and suggestions during the study of related topics, and constant encouragement and support. He also thanks professors Valentino Tosatti for comments on our results, Jian Xiao for a number of discussions, Xiaokui Yang for pointing out results in \cite{Y19}, Hui-Chun Zhang and Fangyang Zheng for interest and comments on a previous version and the referees for valuable comments and suggestions.

\end{document}